\def\@marginparreset{%
  \reset@font
  \tiny
  \linespread{0.9}
  \@marginparfont
  \@setminipage
}
\def\@marginparfont{\normalfont\itshape\color{red}\raggedleft}
\crefname{theorem}{Theorem}{Theorems}
\crefname{lemma}{Lemma}{Lemmas}
\crefname{remark}{Remark}{Remarks}
\crefname{prop}{Proposition}{Propositions}
\crefname{defn}{Definition}{Definitions}
\crefname{corollary}{Corollary}{Corollaries}
\crefname{conjecture}{Conjecture}{Conjectures}
\crefname{chapter}{Chapter}{Chapters}
\crefname{section}{Section}{Sections}
\crefname{figure}{Figure}{Figures}
\crefname{example}{Example}{Examples}
\theoremstyle{plain}
\newtheorem{theorem}{Theorem}[section]
\newtheorem{lemma}[theorem]{Lemma}
\newtheorem{corollary}[theorem]{Corollary}
\newtheorem{prop}[theorem]{Proposition}
\theoremstyle{definition}
\newtheorem{defn}[theorem]{Definition}
\newtheorem{example}[theorem]{Example}
\theoremstyle{remark}
\newtheorem{remark}[theorem]{Remark}
\newcommand{\cF}{\mathcal F}
\newcommand{\cT}{\mathcal T}
\renewcommand{\subset}{\subseteq}
\renewcommand{\emptyset}{\varnothing}
\newcommand{\emp}{\emptyset}
\renewcommand{\leq}{\leqslant}
\renewcommand{\geq}{\geqslant}
\newcommand{\ds}{\displaystyle}
\DeclareMathOperator{\PP}{\mathbb{P}} 
\DeclareMathOperator{\EX}{\mathbb{E}} 
\DeclareMathOperator{\EXbn}{\mathbb{E}\!}
\DeclareMathOperator{\IND}{\mathbbm{1}} 
\DeclareMathOperator{\VAR}{\mathbb{V}\!\mathrm{a}\mathrm{r}}
\newcommand{\dt}{\textrm{d}t}
\newcommand{\du}{\textrm{d}u}
\newcommand{\e}{\textrm{e}}
\newcommand{\ZZ}{\mathbb{Z}}
\newcommand{\RR}{\mathbb{R}}
\DeclareMathOperator{\imag}{Im}
\newcommand{\boolf}{\colon \{-1, 1\}^n \to}
\DeclareMathOperator{\totinf}{Inf}
\DeclareMathOperator{\infl}{Inf}
\DeclareMathOperator{\cinfl}{CInf}
\DeclareMathOperator{\jinfl}{JInf}
\newcommand{\poch}{\partial}
\newcommand{\bW}{\mathbf W}
\DeclareMathOperator{\maxinf}{MaxInf}
\newcommand{\ff}{\hat{f}}
\renewcommand{\gg}{\hat{g}}
\newcommand{\HH}[1]{\widehat{H^{#1}}}
\newcommand{\eldwa}{L^2(\{-1,1\}^n, \mu_n)}
\newcommand{\xdopl}{x^{i\to +1}}
\newcommand{\xdomi}{x^{i\to -1}}
\newcommand{\bi}{\mathbf{i}}
\newcommand{\bj}{\mathbf{j}}
\newcommand{\bk}{\mathbf{k}}
\newcommand{\bx}{\mathbf{x}}
\renewcommand{\c}{\textbf{C1}}
\newcommand{\cc}{\textbf{C2}}
\newcommand{\ccc}{\textbf{C3}}
\title{KKL theorem for the influence of a set of variables}
\author[T. Przyby\l owski]{Tomasz Przyby\l owski}
\address{Mathematical Institute, Andrew Wiles Building, Observatory Quarter, University of Oxford, Oxford OX2 6GG, United Kingdom}
\address{Institute of Mathematics, University of Warsaw, ul. Banacha 2, 02-097 Warsaw, Poland}
\email{tomasz.przybylowski@kellogg.ox.ac.uk}
\subjclass{42C10, 60E15, 68R01}
\keywords{Boolean function, influence, discrete cube}
\begin{document}

\begin{abstract}
Consider a Boolean function $f$ on the $n$-dimensional hypercube, and a set of variables (indexed by) $S \subset \{1,2,\ldots,n\}.$ The coalition influence of the variables $S$ on a function $f$ is the probability that after a random assignment of variables not in $S$, the value of $f$ is undetermined. In this paper, we study a complementary notion, which we call the joint influence: the probability that, after a random assignment of variables not in $S$, the value of $f$ is dependent on all variables in $S$.



We show that for an arbitrary fixed $d$, every Boolean function~$f$ on~$n$ variables admits a~$d$-set of joint influence at least $\tfrac{1}{10} \bW^{\geq d}(f) (\frac{\log n}{n})^d$, where $\bW^{\geq d}(f)$ is the Fourier weight of $f$ at degrees at least $d$. This result is a direct generalisation of the Kahn--Kalai--Linial theorem. Further, we give an example demonstrating essential sharpness of the above bound. In our study of the joint influence we consider another notion of multi-bit influence recently introduced by Tal.
\end{abstract}

\maketitle

\section{Introduction}

Consider $i \in [n] = \{1,2,\ldots,n\}$ and $f \boolf \{-1,1\}$. Throughout, we consider the uniform measure $\mu_n$ on $\{-1, 1\}^n$. Define the \emph{influence} of bit $i$ on $f$ by 
\[\infl_i(f) := \underset{\bx \sim \mu_n}{\PP}( f(\bx) \neq f(\bx^{\oplus i})), \]
where $x^{\oplus i} = (x_1, \ldots, x_{i-1}, -x_i, x_{i+1}, \ldots, x_n)$.

The influence quantifies how often a given bit can sway the outcome of the function. It is a well-studied notion central to the field of Boolean analysis. Its history and various properties can be found in the textbook of Ryan O'Donnell \cite{O'D14}. A celebrated result, due to Kahn, Kalai and Linial \cite{KKL88}, says that every Boolean function admits a very influential bit. 


\begin{theorem}[KKL theorem]
There is a universal constant $C_1 > 0$ such that for every $f \boolf \{-1,1\}$ there is $i \in [n]$ such that \[\infl_i(f) \geq C_1 \VAR(f) \frac{\ln n}{n},\]
where $\VAR (f) = \underset{\bx \sim \mu_n}{\VAR} (f(\bx)).$
\end{theorem}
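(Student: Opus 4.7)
The plan is to follow the original argument of Kahn, Kalai, and Linial, using the Bonami--Beckner hypercontractive inequality as the engine and a Fourier-level dichotomy to close the argument.

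First, for each $i \in [n]$ introduce the discrete partial derivative $D_i f(x) := \tfrac{1}{2}(f(\xdopl)-f(\xdomi))$. Since $f$ is $\pm 1$-valued, $D_i f$ is $\{-1,0,1\}$-valued, and hence $\|D_i f\|_p^p = \infl_i(f)$ for every $p \geq 1$. The $(2,1+\rho^2)$ hypercontractive inequality applied to $D_i f$ then gives
\[ \|T_\rho D_i f\|_2^2 \leq \|D_i f\|_{1+\rho^2}^2 = \infl_i(f)^{2/(1+\rho^2)} \qquad (\rho \in (0,1)). \]
Expanding $D_i f = \sum_{\bj \ni i} \hat{f}(\bj)\,\chi_{\bj \setminus \{i\}}$ via Plancherel, summing over $i$, and writing $\tau := \max_i \infl_i(f)$ and $\infl_i(f)^{2/(1+\rho^2)} \leq \tau^{(1-\rho^2)/(1+\rho^2)} \infl_i(f)$ yields
\[ \sum_{\bi \neq \emp} |\bi|\, \rho^{2(|\bi|-1)}\, \hat{f}(\bi)^2 \;\leq\; \tau^{(1-\rho^2)/(1+\rho^2)} \sum_{i=1}^n \infl_i(f) \;\leq\; n\,\tau^{2/(1+\rho^2)}. \]

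The second step is a dichotomy controlled by an integer $k$ of order $\ln n$. If $\bW^{>k}(f) \geq \tfrac{1}{2}\bW^{\geq 1}(f)$, then $\sum_i \infl_i(f) \geq k\,\bW^{>k}(f) \geq \tfrac{k}{2}\bW^{\geq 1}(f)$, so by pigeonhole $\tau \geq \tfrac{k}{2n}\bW^{\geq 1}(f)$, which is the desired conclusion. Otherwise $\sum_{1 \leq |\bi| \leq k} \hat{f}(\bi)^2 \geq \tfrac{1}{2}\bW^{\geq 1}(f)$; then since $|\bi|\rho^{2(|\bi|-1)} \geq \rho^{2(k-1)}$ on this range, the displayed inequality above reduces to
\[ \tfrac{1}{2}\,\rho^{2(k-1)}\,\bW^{\geq 1}(f) \;\leq\; n\,\tau^{2/(1+\rho^2)}. \]
Fixing $\rho^2 = 1/2$ (so $2/(1+\rho^2) = 4/3$) and $k = \lfloor c\ln n \rfloor$ for a small enough constant $c < 1/(3\ln 2)$ keeps $\rho^{-2k}$ a sublinear power of $n$, and rearranging yields $\tau \geq C_1\,\bW^{\geq 1}(f)(\ln n)/n$ in this branch as well.

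The main quantitative obstacle is the calibration in the low-degree case: the exponent $2/(1+\rho^2)>1$ from hypercontractivity and the blow-up factor $\rho^{-2(k-1)}$ pull in opposite directions, and $\rho$ and $k$ must be chosen together so that the resulting bound on $\tau$ matches the $(\ln n)/n$ rate produced by the averaging argument in the high-degree case, rather than degenerating to a weaker polynomial bound.
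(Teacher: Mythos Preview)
Your argument is correct; it is essentially the original Kahn--Kalai--Linial proof: apply hypercontractivity once at a fixed noise rate to each $D_i f$, sum, and then run a high-degree/low-degree dichotomy calibrated at level $k\asymp\ln n$. The paper, however, does not prove the KKL theorem this way. It deduces it as the $d=1$ case of \cref{kkl111}, whose proof goes through an integral representation $\bW^{\geq 1}(f)=2\sum_i\int_0^\infty e^{-2t}\|P_t\partial_i f\|_2^2\,\mathrm{d}t$ (\cref{kklprop1}) and then bounds each integral via hypercontractivity over \emph{all} times $t$ (\cref{kklprop2}), yielding directly $\bW^{\geq 1}(f)\leq 2n\,\tau/\ln(1/\tau)$ with no degree dichotomy; the only case split is the trivial threshold $\tau\lessgtr n^{-1/2}$. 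Your route is shorter and more familiar for $d=1$, but the paper's integral-representation approach is what generalises cleanly to $d$-sets: the factor $(e^{2t}-1)^{d-1}$ in the representation and the $\ln^d(1/\tau)$ in the resulting bound are exactly what produce the $(\ln n/n)^d$ rate, whereas extending your fixed-$\rho$ dichotomy to $d>1$ is less transparent.
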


This is the best possible bound up to constant $C_1$ as certified by the \emph{tribe functions} introduced by Ben-Or and Linial~\cite{BL85}.

Several generalizations of the definition of the single-bit influence accounting for multiple bits have been considered, see \cite{biswassarkar} for discussion. The one probably most studied is the coalition influence, which we will now define.

In what follows, for brevity, we associate a variable $x_i \in\{-1,1\}$ with its index $i$. Let $\bi \subset [n]$ be a set of variables. The coalition influence $\cinfl_\bi(f)$ of $\bi$ on $f$ is defined as the probability that assigning values to the variables not in $\bi$ at random, the value of $f$ is undetermined. There has been a long line of research studying this type of influence, for the main results see \cite{KKL88}, \cite{AL93}, \cite{BKKKL92}.


The focus of our paper is a counterpart notion to the coalition influence. We call it the \emph{joint influence} $\jinfl_\bi(f)$ of $\bi$ on $f$ and define it as the probability that assigning values to the variables not in $\bi$ at random, the value of $f$ is dependent on \emph{all} of the variables in $\bi$.

More formally, say that $i \in \bi$ is \emph{$\bi$-pivotal} for $f$ on input $x$ if there is an input $y$ agreeing with $x$ on bits $[n] \setminus \bi$ such that $f(y) \neq f(y^{\oplus i})$. Then for non-empty $\bi$ define
\begin{align*}
    \jinfl_\bi(f) &:= \underset{\bx \sim \mu_n}{\PP}(\text{every }i\in \bi \text{ is } \bi \text{-pivotal for }f\text{ on input } \bx).
\end{align*}

Note that using $\bi$-pivotality, the coalition influence satisfies \[\cinfl_\bi(f) = \underset{\bx \sim \mu_n}{\PP}(\text{some }i\in \bi \text{ is } \bi \text{-pivotal for }f\text{ on input } \bx).\]
In particular, $\cinfl$ can be thought of as a \emph{union} of influences, whereas $\jinfl$ as an \emph{intersection}, and hence the name. As a result, it follows that $\cinfl_\bi(f) \geq \jinfl_\bi(f)$.

Let us mention yet another generalization of the notion of single bit influence, purely analytic one, as it will turn out useful in studying the joint influence. For $\bi \subseteq [n]$ we define the \emph{Walsh functions} $\chi_\bi \boolf \RR$~by $\chi_\bi(x) := \prod_{j \in \bi} x_j$ and $\chi_\emptyset(x) := 1$. The system $\{\chi_\bi\colon \bi \subseteq [n]\}$ is an orthonormal basis of $\eldwa$, which yields that every function $f\boolf \RR$ has a unique \emph{Fourier--Walsh expansion}
$f = \sum_{\bi \subseteq [n]} \ff(\bi) \chi_\bi$,
where $\ff(\bi) \in \RR$. We define the \emph{T-influence} $\infl_\bi(f)$ by
\begin{equation*}
\infl_\bi(f) := \sum_{\substack{\ \bj \subseteq [n]\!\colon \\ \bj \supseteq \bi}} \ff(\bj)^2.
\end{equation*}
It has been introduced by Tal in \cite{tal17} in his study of the Fourier mass at high levels in $\bold{AC^0}$ Boolean circuits. However, it has been independently considered by Tanguy in \cite{T20} for two-element $\bi$'s and in this case further studied by Oleszkiewicz in \cite{Ol21}. Often, when clear from the context, we will refer to it simply as the \emph{influence}. Sometimes, we will say that $\infl_\bi(f)$ is a \emph{$d$-degree} influence, where $d = |\bi|$.

The T-influence, although interesting on its own, will open to us the Fourier methods to bound the joint influence via inequality $\jinfl_\bi(f) \geq \infl_\bi(f)$, proven in \cref{derinotzero}. Further, note that in the case of $\bi = \{i\}$ all the three generalization of the single-bit influence are equal to $\infl_i(f)$ (with the equality $\infl_{\{i\}}(f) = \infl_i(f)$ being folklore).


\subsection{T-influence results}
Let us say a few words on what relevant to us has been proved on the T-influence.

The case of $|\bi| = 2$ has been considered by Tanguy in \cite{T20}, where he obtained a result in the spirit of the KKL theorem. Namely, he proved that every Boolean $f$ either admits a pair of bits with T-influence at least $C \VAR(f) (\frac{\ln n}{n})^2$, or there is a single bit with influence at least $C (\VAR(f)/n)^\theta$, for some universal constants $C>0$ and $\theta \in (1/2,1)$.

Later, Oleszkiewicz \cite{Ol21} extended some of Tanguy's bounds and proved that an appropriate upper bound on influences of all pairs of bits of a Boolean~$f$ implies that~$f$ is close to some affine Boolean function. 

\begin{theorem}[Oleszkiewicz]\label{oleszkiewicz}
There exists a universal constant $C_2>0$ with the following property. Let $n \geq 2$. Assume that $f \boolf \{-1,1\}$ satisfies $\infl_{\{i,j\}}(f) \leq \alpha \left(\frac{\ln n}{n}\right)^2$ for all $1 \leq i < j \leq n$ for some $\alpha > 0$. Then either
\begin{equation*}
\ff(\emptyset)^2 \geq 1-C_2\alpha, \quad \text{or there exists exactly one} \ i \in [n] \ \text{such that} \ \ff(\{i\})^2 \geq 1-C_2\alpha \frac{\ln n}{n}.
\end{equation*}
\end{theorem}

As Oleszkiewicz notes, the above theorem might be viewed as both a variation of the KKL theorem and another celebrated result due to Friedgut, Kalai and Naor \cite{FKN02}. For the purpose of the background of Oleszkiewicz's and our results, let us recall a generalisation of the FKN theorem due to Kindler and Safra \cite{KS02} (the FKN theorem is recovered by taking $d=1$).

\begin{defn}
Let $d \geq 0$ be an integer. We say that $g\boolf \{-1,1\}$ is of \emph{$d$-degree} if, for every $\bi \subset [n]$ such that $|\bi| > d$, we have $\gg(\bi) = 0$. Equivalently, $g$ is of $d$-degree if its degree as a Fourier--Walsh polynomial is at most $d$. 
\end{defn}

\begin{theorem}[Kindler--Safra] \label{kindlersafra}
Let $d$ be a positive integer. If $f\boolf \{-1,1\}$ satisfies $\bW^{\geq d+1}(f) \leq \alpha$, then there exists $g\boolf\{-1,1\}$ of $d$-degree such that
\begin{equation*}
\|f-g\|_2^2 \leq C_{KS}\alpha,
\end{equation*}
where $C_{KS}$ is a constant depending only on $d$ and $\bW^{\geq d+1}(f) = \sum_{\bi \subset [n]\colon\! |\bi| \geq d+1} \ff(\bi)^2$.
\end{theorem}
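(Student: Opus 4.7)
The plan is to produce a degree-$\leq d$ Boolean approximant $g$ to $f$ by combining Fourier truncation with a junta-type structural reduction, and then a finite rounding on a cube of bounded dimension. Write $p := f^{\leq d}$; Parseval gives $\|f-p\|_2^2 = \bW^{\geq d+1}(f) \leq \alpha$. The difficulty is that $p$ need not be Boolean, and one cannot simply take $g := \sgn(p)$ because the pointwise sign may fail to have degree $\leq d$. The organising principle is therefore: first reduce $f$ to an $O_d(1)$-dimensional sub-cube, then round within it.

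The central step is a quantitative junta-approximation statement: for $\alpha$ at most some $\alpha_0(d)>0$, there is $J \subseteq [n]$ with $|J| \leq K(d) = O_d(1)$ such that the conditional expectation $f_J(x) := \EX[f \mid x_J]$ satisfies $\|f - f_J\|_2^2 = O_d(\alpha)$. This is a Bourgain-type junta theorem and is the heart of the argument. I would build $J$ by iteratively collecting coordinates $i$ with large influence $\infl_i(p)$ at levels $\leq d$, while using the Bonami hypercontractive bound $\|p\|_4 \leq 3^{d/2}\|p\|_2$ together with the Boolean hypothesis $f^2 \equiv 1$ to force $|J|$ to be uniformly bounded in $n$.

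With $J$ in hand, $f_J$ lives on $\{-1,1\}^J$, takes values in $[-1,1]$, and satisfies $\bW^{\geq d+1}(f_J) \leq \bW^{\geq d+1}(f) \leq \alpha$. On a cube of fixed dimension $|J| \leq K(d)$ the set of Boolean functions of degree $\leq d$ is finite; pick $g_J \colon \{-1,1\}^J \to \{-1,1\}$ of degree $\leq d$ minimising $\|f_J - g_J\|_2$. A finite-dimensional compactness/continuity argument on the bounded cube gives $\|f_J - g_J\|_2^2 = O_d(\alpha)$. Extending $g_J$ to a Boolean function $g$ on $\{-1,1\}^n$ independent of coordinates outside $J$ and applying the triangle inequality yields $\|f - g\|_2^2 = O_d(\alpha)$, as required.

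The main obstacle is the junta-approximation step. A naive approach---keeping the top-$O(1/\alpha)$ most influential coordinates---produces a set whose size depends on $\alpha$ and $n$ rather than on $d$ alone, and does not genuinely exploit Booleanity. The correct argument must use $f^2 \equiv 1$ essentially, typically via hypercontractivity on $p$ to decouple from $n$. An alternative route is induction on $d$ with the FKN theorem as the base case $d=1$; the inductive step then requires reducing the high-degree mass of restrictions $f^{i\to \pm 1}$ in a controlled way, which again ultimately invokes the same hypercontractive machinery. Obtaining the correct (at least exponential in $d$) dependence of $K(d)$, and hence of $C_{KS}$, on $d$ is where the bulk of the technical effort lies.
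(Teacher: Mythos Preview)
The paper does not prove \cref{kindlersafra}; it is quoted as background, attributed to Kindler and Safra \cite{KS02}, and invoked as a black box in the proof of \cref{fkntwierdzenie}. There is therefore no proof in the paper to compare your attempt against.

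On the merits of your sketch: the outline (truncate to degree $d$, establish that the low-degree part is essentially a $K(d)$-junta, then round on the bounded cube) is indeed the skeleton of the Kindler--Safra argument. Two places deserve more than you give them. First, the junta step---you correctly flag it as the crux, but ``iteratively collect large-influence coordinates and use $\|p\|_4 \leq 3^{d/2}\|p\|_2$'' is not yet an argument; the actual mechanism that bounds $|J|$ independently of $n$ and $\alpha$ uses the Boolean identity $f^2\equiv 1$ to relate the Fourier spectrum of $p^2$ to that of $p$, and this is where the real work happens. Second, your ``finite-dimensional compactness/continuity'' step, as stated, yields only that $\|f_J-g_J\|_2^2 \to 0$ as $\alpha\to 0$, not the linear bound $O_d(\alpha)$. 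To get linearity you need a local computation near each degree-$d$ Boolean point on $\{-1,1\}^J$ (e.g.\ writing $h=g+v$ with $gv\leq 0$ pointwise and comparing $\|v\|_2^2$ to $2\|v\|_1-\|v\|_2^2$), together with a positive lower bound for the ``defect'' away from those points. Neither issue is fatal, but both need genuine arguments rather than a wave of the hand.
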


In the course of the proof of \cref{oleszkiewicz}, Oleszkiewicz shows that every Boolean function $f$ admits a pair of distinct bits $i$,~$j$ such that $\infl_{\{i,j\}}(f) \geq C_O \bW^{\geq2}(f) (\frac{\ln n}{n})^2$. Hence the assumption in \cref{oleszkiewicz} of small pair influences yields that $\bW^{\geq 2}(f)$ is small, so in particular implies the assumption of \cref{kindlersafra} for $d=1$. However, the stronger assumption in \cref{oleszkiewicz} yields the stronger consequence of extra $\ln n / n$ factor.

\subsection{Main results}

Inspired by the above, we prove the following theorem.

\begin{theorem} \label{Main Theorem}
Let $n \geq d \geq 1$ be integers. For every $f\boolf \{-1,1\}$ there exists $\bi \subset [n]$ of size $d$ so that
\begin{equation*}
\jinfl_\bi(f) \geq \infl_\bi(f) \geq \frac{1}{10} \bW^{\geq d}(f) \left(\frac{\ln n}{n}\right)^d,
\end{equation*}
where $\bW^{\geq d}(f) = \sum_{\bi \subset [n]\colon\! |\bi| \geq d} \ff(\bi)^2$.
\end{theorem}

Observe that for $d=1$ we recover the KKL theorem, and the case $d=2$ is the above-mentioned consequence of the proof of \cref{oleszkiewicz}. We prove \cref{Main Theorem} in \cref{sec:kklg} by generalising the representation of $\bW^{\geq 2}(f)$ and the hypercontractivity lemma from~\cite{Ol21}. Recall that the first inequality holds actually for every $\bi$ thanks to \cref{derinotzero}.

In \cref{sec:ex} we prove that \cref{Main Theorem} is asymptotically sharp in the following sense.

\begin{theorem}
For every $d \geq 2$ there is a constant $C^d_H$ depending only on $d$, and a family of functions $H_n^d$ such that for every $\bi \subset [n]$ of size $d$ we have
\[ \infl_\bi(H_n^d) \leq \jinfl_\bi(H_n^d) \leq C^d_H \bW^{\geq d}(H_n^d) \left(\frac{\log_2n}{n}\right)^d. \]
\end{theorem}

The functions $H_n^d$, which we call the \emph{$d$-hypertribes}, are a variation on the classical tribe functions due to Ben-Or and Linial \cite{BL85} demonstrating the sharpness of the KKL theorem. As was pointed out in \cite{Ol21}, the standard tribe function cannot serve as a sharp example in our case. E.g. for $d=2$ influences of pairs of bits from the same tribe are larger than the influence of a typical pair. To account for this, one needs to increase the size of tribes and allow voters to belong to multiple tribes in a controlled way. These adjustments complicate the structure of the function, and the explanation why it serves as a sharp example becomes far from trivial. The construction of the $d$-hypertribes was suggested to us by Peter Keevash in a private communication.

Further, in \cref{sec:prefkn,sec:fkn} we prove the following $d$-degree generalisation of \cref{oleszkiewicz}.

\begin{theorem} \label{FKN-type theorem}
Let $n > d \geq 1$. There are positive constants $C_3$, $C_4$ depending only on $d$ such that if $f \boolf \{-1,1\}$ satisfies
\begin{equation} \label{assumpfkn}
\infl_\bi(f) \leq \alpha \left(\frac{\ln n}{n}\right)^{d+1}
\end{equation}
for all $\bi\subset [n]$ of size $d+1$ for some $\alpha \in (0,C_3)$, then there exists $g \boolf \{-1,1\}$ of degree $d$ such that
\begin{equation*}\label{fknstat}
|\ff(\bj) - \hat{g}(\bj)| \leq C_4 \alpha \left(\frac{\ln n}{n}\right)^{|\bj|}
\end{equation*}
for every $\bj \subset [n]$ of size at most $d$.
\end{theorem}

For $d=1$ we recover \cref{oleszkiewicz}. Indeed, the only $1$-degree Boolean functions are the constants $-1$, $+1$, single-bit functions $\chi_{\{i\}} \equiv x_i$ and their negations. Depending which of these $g$ is, from \cref{fknstat} we can recover the relevant alternative of \cref{oleszkiewicz}.

We will see in \cref{sec:basic} that $\infl_\bi(f) = \|\poch_\bi f\|_2^2$, where $\poch_\bi f$ is a (discrete) $d$-order partial derivative of~$f$. From this point of view, \cref{FKN-type theorem} is a Boolean variant of a theorem from real analysis stating roughly that if all the $(d+1)$-order partial derivatives of $f\colon [0,1]^n \to \RR$ are small, then $f$ is close to some polynomial of degree at most $d$.

Similarly to \cref{oleszkiewicz}, \cref{FKN-type theorem} might be viewed as a variation on the Kindler--Safra \cref{kindlersafra}. And exactly like there, the assumption \eqref{assumpfkn} implies the Kindler and Safra assumption: we have $\bW^{\geq d+1}(f) \leq 10\alpha.$ 
However, \cref{FKN-type theorem} tells us that this stronger assumption yields a better approximation. 

Further, if we additionally assume that $f$ has small coefficients of low frequencies, then we obtain a strictly better bound on the distance $\|f-g\|_2^2$ compared to the Kindler--Safra Theorem. We prove this at the end of \cref{sec:fkn}.

\begin{corollary}\label{ourcorr}
Let $n > d \geq 1$. Assume that $f \boolf \{-1,1\}$ satisfies \eqref{assumpfkn} and that for some $l \in [d]$ we have $|\ff(\bi)| \leq 2^{-d}$ for all $\bi \subset [n]$ of size at most $l-1$. Then there exists a function $g \boolf \{-1,1\}$ of $d$-degree such that
\begin{equation*} \label{kwantyzacja}
\|f-g\|_2^2 \leq C_5\alpha \left(\frac{\ln n}{n}\right)^l,
\end{equation*}
where $C_{5}$ is a constant depending only on $d$.
\end{corollary}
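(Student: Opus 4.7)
The plan is to combine \cref{fknthm} with the granulation property of Boolean low-degree functions (every nonzero Fourier coefficient of a Boolean function of degree $\leq d$ has absolute value at least $2^{1-d}$). Together with the hypothesis $|\ff(\bi)| \leq 2^{-d}$, the granulation will force the approximant $g$ from \cref{fknthm} to vanish on all low frequencies, and a short Parseval computation using the resulting sparsity will yield the bound.

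First I would apply \cref{fknthm} to obtain a Boolean $g$ of degree $d$ satisfying $|\ff(\bj)-\gg(\bj)| \leq C_4\alpha(\ln n/n)^{|\bj|}$ for every $|\bj| \leq d$. Then, invoking the granulation fact, for every $|\bi| \leq l - 1$,
\begin{equation*}
|\gg(\bi)| \leq |\ff(\bi)| + C_4\alpha \leq 2^{-d} + C_4 C_3 < 2^{1-d},
\end{equation*}
provided $C_3$ is chosen so that $C_4 C_3 < 2^{-d}$. Granulation then forces $\gg(\bi) = 0$ for every such $\bi$, so the Fourier support $S := \{\bj : \gg(\bj) \neq 0\}$ lies in $\{\bj : l \leq |\bj| \leq d\}$; moreover $|S| \leq 4^{d-1}$, since each nonzero $|\gg(\bj)|^2 \geq 4^{1-d}$ and $\sum_\bj \gg(\bj)^2 = 1$.

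Using $\|f\|_2^2 = \|g\|_2^2 = 1$ and $\sum_{\bj \in S}\gg(\bj)^2 = 1$,
\begin{equation*}
\|f - g\|_2^2 = 2 - 2\sum_{\bj \in S}\ff(\bj)\gg(\bj) = -2\sum_{\bj \in S}\bigl(\ff(\bj) - \gg(\bj)\bigr)\gg(\bj).
\end{equation*}
Each summand is bounded by $C_4\alpha(\ln n/n)^l$ (since $|\gg(\bj)| \leq 1$ and $|\bj| \geq l$ for $\bj \in S$), and summing over $|S| \leq 4^{d-1}$ terms gives the claim with $C_5 := 2 \cdot 4^{d-1}C_4$.

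The main obstacle is justifying the granulation step; without it, a direct Parseval estimate using only \cref{fknthm} yields at best $O_d(\alpha)$, recovering the Kindler--Safra bound but missing the sharper $(\ln n/n)^l$ factor.
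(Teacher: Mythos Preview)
Your proof is correct and follows essentially the same route as the paper: apply \cref{fknthm}, use the granulation property (the paper's \cref{wsppocho}) together with the smallness of $|\ff(\bi)|$ to force $\gg(\bi)=0$ for $|\bi|<l$, and then exploit the identity $\|f-g\|_2^2 = 2\langle g-f, g\rangle$ and the sparsity of $g$'s Fourier support to sum up the bound. The only cosmetic difference is that the paper obtains $|\ff(\bi)-\gg(\bi)|<2^{-d}$ by appealing to an intermediate estimate inside the proof of \cref{fknthm}, whereas you deduce it directly from the statement via $C_4\alpha < C_4 C_3 < 2^{-d}$; both work, and with the paper's explicit constants one indeed has $C_4 C_3 \leq 2^{-2d}$.
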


\subsection{Acknowledgements}
The author was supported by the Additional Funding Programme for Mathematical Sciences, delivered by EPSRC (EP/V521917/1) and the Heilbronn Institute for Mathematical Research. The majority of this work was written as a master's thesis at the University of Warsaw. I would like to thank my supervisor Krzysztof Oleszkiewicz for help and for an interesting topic to research, and to Christina Goldschmidt, Peter Keevash and Oliver Riordan for valuable suggestions and discussions.

\section{Notations}\label{subsec:nota} 

Throughout the paper, $n$ is an arbitrary positive integer (possibly further constrained). Let $d \in \ZZ_{\geq 0}$. We use the standard notation $[n]=\{1,2,\ldots,n\}$ and the less standard notation
\[ [n]_d := \{ A \subseteq [n]\colon |A| = d\}, \quad [n]_{<d} := \{ A \subseteq [n]\colon |A| < d\}. \]
We define $[n]_{\leq d}$, $[n]_{>d}$ and $[n]_{\geq d}$ in an analogous manner.

We write $\imag(f)$ for the image of a function $f$, and refer to functions with $\imag(f) \subset \{-1,1\}$ as \emph{Boolean} functions. Plain letters $i$, $j$, $k$ usually denote indices of bits of the input, i.e.\ elements of $[n]$, whereas bold letters $\bi$, $\bj$, $\bk$ refer to index sets, i.e.\ subsets of $[n]$. For brevity, we will associate a variable $x_i \in\{-1,1\}$ with its index $i$, and so we will say e.g. \emph{variables~$\bi$} instead of \emph{variables $\{x_i\colon i \in \bi\}$}. Sometimes, for clarity, we omit brackets and write e.g. $\bi \setminus i$ instead of $\bi \setminus \{i\}$. The name \emph{$d$-set} will stand for a $d$ element subset of $[n]$.

For an input $x = (x_1,\ldots,x_n)$, $i\in[n]$ and $\varepsilon \in \{-1,1\}$ we write 
\begin{align*}
    x^{\oplus i} &= (x_1,\ldots,x_{i-1},-x_i, x_{i+1},\ldots,x_n), \\
    x^{i \to \varepsilon} &= (x_1,\ldots,x_{i-1},\varepsilon, x_{i+1},\ldots,x_n),
\end{align*}
and further, for $\bi \subset [n]$ and $y \in \{-1,1\}^\bi$ define $x^{\bi \to y} \in \{-1,1\}^n$ by
\begin{align*}
    x^{\bi \to y}_j = \begin{cases}
        x_j, \text{ if } j \in [n] \setminus \bi, \\
        y_j, \text{ if } j \in \bi.
    \end{cases}    
\end{align*}
%

\section{Basics of harmonic analysis} \label{sec:basic}

In this section we introduce the standard language of the Boolean analysis. The basic Fourier--Walsh notions are recalled and we repeat the definition of influences in a broader context. Finally, we state some results regarding \emph{hypercontractivity}, which we will use extensively in the proofs of both main theorems. 
Many of these notions (with exception of the multi-bit influences) are explained in greater detail in O'Donnell's book \cite{O'D14}.

Let $\mu_n = (\frac{1}{2}\delta_{-1} + \frac{1}{2}\delta_1)^{\otimes n}$ be the uniform measure on $\{-1,1\}^n$. Let us consider the linear space $\eldwa$ of all functions $f \boolf \RR$ equipped with the standard inner product and norms given by
\begin{gather*}
\langle f, g \rangle := \frac{1}{2^n}\sum_{x \in \{-1,1\}^n} f(x)g(x) = \EX [f(\bx)g(\bx)] = \EX [fg] \quad \text{and} \\ 
\quad \|f\|_2 := \sqrt{\langle f, f\rangle} = \sqrt{\EX [f(\bx)^2]} = \sqrt{\EXbn f^2}. 
\end{gather*}
The probabilities $\PP$ and expectations $\EX$ are always taken with respect to the measure $\mu_n$. The bold letter $\bx$ will then denote a uniform random variable with distribution $\mu_n$. Note that, due to the product structure of $\mu_n$, the set of single bits $\bx_i$ of $\bx$ is a family of independent and identically distributed random variables, each having distribution $\frac{1}{2}\delta_{-1} + \frac{1}{2}\delta_1$. Often, when it is clear from the context, we omit the letter $\bx$ in probabilities or expectations.

For $\bi \subseteq [n]$ we define the \emph{Walsh functions} $\chi_\bi \boolf \RR$~by $\chi_\bi(x) := \prod_{j \in \bi} x_j$ and $\chi_\emptyset(x) := 1$. The system $\{\chi_\bi\colon \bi \subseteq [n]\}$ is an orthonormal basis of $\eldwa$, which yields that every function $f\boolf \RR$ has a unique \emph{Fourier--Walsh expansion}
\begin{equation*}
f = \sum_{\bi \subseteq [n]} \ff(\bi) \chi_\bi,
\end{equation*}
where we call $\ff(\bi) := \langle f, \chi_\bi \rangle \in \RR$ a \emph{Fourier--Walsh coefficient}.

In this setting, every $f, g \boolf \RR$ satisfy the \emph{Plancherel} and \emph{Parseval identities}
\[\langle f, g \rangle = \sum_{\bk \subseteq [n]} \ff(\bk)\hat{g}(\bk) \quad \text{and} \quad \|f\|_2^2 = \sum_{\bk \subseteq [n]} \ff(\bk)^2. \]

Further, note that we have $\EX f = \langle f, 1\rangle = \langle f, \chi_\emptyset\rangle = \ff(\emp)$. Moreover, if $f$ is Boolean (so $\imag{f} \subset \{-1,1\}$) we have $\|f\|_2^2 = \sum_{\bi \subseteq [n]} \ff(\bi)^2 = 1$ and so in this case $|\ff(\bi)| \leq 1$ for every $\bi \subseteq [n]$.

Also, for $f \boolf \RR$ we define
\begin{equation*}
\bW^{\geq d}(f) := \sum_{\bi \in [n]_{\geq d}} \ff(\bi)^2 \quad \text{and}\quad \bW^{=d}(f) := \sum_{\bi \in [n]_{d}} \ff(\bi)^2.
\end{equation*}

We will say that $f$ is of \emph{$d$-degree} if for every $\bi \in [n]_{>d}$ we have $\ff(\bi) = 0$. Equivalently, $f$ is of $d$-degree if its degree as a Fourier--Walsh polynomial is at most $d$.

\subsection{Derivative}

In the following section we introduce the notion of the discrete partial derivatives.
We remark that observations regarding partial derivatives (\cref{poch_multi} and \cref{deri_sum_formula} below) were already made in \cite{tal17}.

 Let $f \boolf \RR$, $i \in [n]$. Define the \emph{$i$-th partial derivative} $\poch_i f \boolf \RR$ by\footnote{Our notation $\poch_i$ is consistent with \cite{Ol21}, but inconsistent with \cite{O'D14, tal17} where the symbol $D_i$ is used instead.}
\begin{equation*} \label{defipoch}
\poch_i f (x) := \frac{f(\xdopl)-f(\xdomi)}{2}.
\end{equation*}
By linearity, one can easily get a useful Fourier--Walsh representation
\begin{equation} \label{pochodna}
\poch_i f = \sum_{\substack{\ \bj \subseteq [n]\!\colon \\ \bj \ni i}} \ff(\bj) \chi_{\bj \setminus \{i\}}.
\end{equation}

The derivative operators may be composed. The order of composition does not matter due to the Fourier--Walsh representation (\ref{pochodna}). In consequence, we can give the following general definition.

\begin{defn}\label{poch_multi} Let $f\boolf \RR$ and $\bi  = \{i_1, i_2, \ldots, i_k\} \subseteq [n]$. We define the \emph{$\bi$-th partial derivative $\poch_\bi f \boolf \RR$} by
\begin{equation*}
\poch_\bi f := \poch_{i_1} \circ \poch_{i_2} \circ \ldots \circ \poch_{i_k} f = \sum_{\substack{\ \bj \subseteq [n]\!\colon\\ \bj \supseteq \bi}} \ff(\bj) \chi_{\bj \setminus \bi} \qquad \text{and} \qquad \poch_\emptyset f := f.
\end{equation*}
\end{defn}

A trivial induction proves that the definition of the single-bit derivative extends to a multi-bit counterpart in the following way.

\begin{prop} \label{deri_sum_formula}
Let $f \boolf \RR$ and $\bi \subset [n]$ be non-empty. Then
\begin{equation} \label{even_sum}
\poch_\bi f (x) = \frac{1}{2^{|\bi|}} \sum_{y \in \{-1,1\}^\bi} \Big(f(x^{\bi \to y}) \cdot \prod_{j \in \bi} y_j \Big).
\end{equation}
\end{prop}

\begin{corollary} \label{der_int_mult}
Let $f \boolf \{-1,1\}$ and $\bi \subset [n]$ be non-empty. Then \[\imag(\poch_\bi f) \subset \ZZ / 2^{|\bi|-1} \cap [-1,1].\]
\end{corollary}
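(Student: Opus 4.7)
The plan is to deduce the claim directly from the explicit formula \eqref{even_sum} supplied by \cref{deri_sum_formula}. Writing $k := |\bi| \geq 1$, that formula expresses
\[
\poch_\bi f(x) \;=\; \frac{1}{2^{k}} \sum_{\varepsilon \in \{-1,1\}^\bi} f(x^{\bi \to \varepsilon}) \prod_{j \in \bi}\varepsilon_j \;=\; \frac{S}{2^{k}},
\]
where $S$ is a sum of $2^{k}$ terms, each of the form $\pm f(x^{\bi\to\varepsilon})$ and hence each in $\{-1,1\}$ because $f$ is Boolean.

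First I would observe that $|S| \leq 2^{k}$ by the triangle inequality, which already yields $|\poch_\bi f(x)| \leq 1$. Next, since $S$ is a sum of $2^{k}$ numbers that are each $\pm 1$, and $2^{k}$ is even (as $k \geq 1$), a parity argument gives $S \equiv 2^{k} \equiv 0 \pmod 2$, so $S = 2m$ for some integer $m$. Therefore
\[
\poch_\bi f(x) \;=\; \frac{2m}{2^{k}} \;=\; \frac{m}{2^{k-1}} \;\in\; \tfrac{1}{2^{k-1}}\ZZ \cap [-1,1],
\]
which is exactly the desired containment. The argument is genuinely routine; the only subtle point is the parity step, which crucially relies on $\bi$ being non-empty (so that $2^{k}$ is even). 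There is no real obstacle beyond correctly identifying that the sum $S$ has even parity, and the bound $|S|\leq 2^k$ ensures the result lands inside $[-1,1]$ rather than outside.
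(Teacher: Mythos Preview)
Your proof is correct and follows essentially the same approach as the paper: both invoke \cref{deri_sum_formula}, bound $|S|\leq 2^{|\bi|}$ for the $[-1,1]$ containment, and then observe that $S$ is a sum of $2^{|\bi|}$ odd integers (equivalently, $\pm 1$'s) and hence even, giving the $\ZZ/2^{|\bi|-1}$ containment.
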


\begin{proof}
The definition of the single-bit derivative, or \cref{deri_sum_formula}, easily yields $\imag(\poch_\bi f) \subset [-1,1]$. For the other containment, observe that after application of \cref{deri_sum_formula}, all we need to prove is that the sum in \eqref{even_sum} is an even integer. This is indeed the case as it is a sum of $2^{|\bi|}$ odd integers.
\end{proof}

\subsection{Influence}\label{sec:influence}
In the following section we will connect the influence definitions from the introduction with the notion of derivative, and prove some useful bounds between them.

Firstly, let us connect the definition of influence of a single-bit with the Fourier-Walsh coefficients. Observe that $f(x) \neq f(x^{\oplus i})$ occurs if and only if $\poch_i f(x) \neq 0$, which occurs if and only if $|\poch_i f(x)| = 1$. Therefore, by Parseval's identity
\begin{equation*}
    \infl_i(f) = \PP( \poch_i f(\bx) \neq 0) = \EX [(\poch_i f(\bx))^2] = \|\poch_i f\|_2^2 = \sum_{\substack{\ \bj \subseteq [n]\!\colon \\ \bj \ni i}} \ff(\bj)^2.
\end{equation*}
In particular, as we promised in the introduction, $\infl_i(f)$ is equal to the T-influence $\infl_{\{i\}}(f)$.

The definition of the multi-bit derivative yields an analogous representation of the T-influence. Let $f \boolf \RR$ and $\bi \subseteq [n]$. Again, by Parseval's identity we have
\begin{equation*}
\infl_\bi(f) = \sum_{\substack{\ \bj \subseteq [n]\!\colon \\ \bj \supseteq \bi}} \ff(\bj)^2 = \|\poch_\bi f\|_2^2.
\end{equation*}

\begin{prop}\label{infl_lower_bd}
Let $f \boolf \{-1,1\}$, $\bi \subseteq [n]$ and $r = |\bi| > 0$. Then
\begin{enumerate}[label=(\roman*)]
\item $\PP(\poch_\bi f \neq 0) \geq \infl_\bi(f) \geq \frac{1}{2^{2r-2}} \PP(\poch_\bi f \neq 0)$;
\item $\infl_\bi(f) \geq \frac{1}{2^{r-1}} |\ff(\bi)|$.
\end{enumerate}
\end{prop}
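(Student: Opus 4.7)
The plan is to reduce all three inequalities to pointwise properties of $\poch_\bi f$, in particular to the quantisation statement of \cref{der_int_mult}: whenever $\poch_\bi f(x) \neq 0$ we have $2^{-(r-1)} \leq |\poch_\bi f(x)| \leq 1$. Combined with the identity $\infl_\bi(f) = \|\poch_\bi f\|_2^2 = \EX[(\poch_\bi f)^2]$ from \cref{influence2}, this quantisation gap is what drives everything.

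For (ii), I would first observe that on the event $\{\poch_\bi f \neq 0\}$ we have $(\poch_\bi f)^2 \leq 1$ and $(\poch_\bi f)^2 \geq 2^{-(2r-2)}$, while on the complement both sides vanish. Integrating then yields
\[2^{-(2r-2)}\,\PP(\poch_\bi f \neq 0) \leq \EX[(\poch_\bi f)^2] \leq \PP(\poch_\bi f \neq 0),\]
which is exactly (ii). For (i), in view of (ii) it suffices to show $\mathcal{I}_\bi(f) \geq \PP(\poch_\bi f \neq 0)$. Given $x$ with $\poch_\bi f(x) \neq 0$, the representation \eqref{even_sum} from \cref{deri_sum_formula} forces not all values $f(x^{\bi\to\varepsilon})$ over $\varepsilon \in \{-1,1\}^\bi$ to coincide (otherwise the signed sum would vanish). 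Hence at least one such $\varepsilon$ satisfies $f(x^{\bi\to\varepsilon}) \neq f(x)$, and $y_x := x^{\bi\to\varepsilon}$ witnesses the Ben--Or--Linial event at $x$; this shows $\{\poch_\bi f \neq 0\} \subseteq \{\exists y_\bx\colon y_\bx|_{[n]\setminus\bi} = \bx|_{[n]\setminus\bi},\ f(\bx) \neq f(y_\bx)\}$ and thus the desired inequality.

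For (iii), I would use that $\ff(\bi) = \widehat{\poch_\bi f}(\emptyset) = \EX[\poch_\bi f]$, which is immediate from the Fourier--Walsh formula for $\poch_\bi f$ in \cref{poch_multi}. Invoking the quantisation again, on the support of $\poch_\bi f$ we have $|\poch_\bi f|^2 \geq 2^{-(r-1)}\, |\poch_\bi f|$ (as $a^2 \geq 2^{-(r-1)} a$ whenever $a \geq 2^{-(r-1)}$), and this trivially holds off the support as well. Taking expectation and applying Jensen,
\[\infl_\bi(f) = \EX[(\poch_\bi f)^2] \geq 2^{-(r-1)} \EX[\,|\poch_\bi f|\,] \geq 2^{-(r-1)}\, |\EX[\poch_\bi f]| = 2^{-(r-1)}\, |\ff(\bi)|.\]

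There is no real obstacle here; the only subtlety is recognising that the quantisation range from \cref{der_int_mult} is the single ingredient that converts between the $L^2$ norm, the $L^1$ norm, and the measure of the support of $\poch_\bi f$, and that the combinatorial link to $\mathcal{I}_\bi(f)$ is exactly the support statement extracted from \eqref{even_sum}.
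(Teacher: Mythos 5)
Your proposal is correct and follows essentially the same route as the paper: the quantisation from \cref{der_int_mult} gives (ii) and (iii) exactly as in the paper's proof (including the step $\ff(\bi)=\EX[\poch_\bi f]$ and the $L^1$--$L^2$ comparison), and for (i) you extract the non-constancy of $f$ on the $\bi$-subcube directly from \eqref{even_sum}, whereas the paper quotes \cref{derinotzero} for the same purpose — an immaterial difference, since that proposition is itself proved from \cref{deri_sum_formula}.
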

\begin{proof}
By \cref{der_int_mult} we have $(\poch_\bi f)^2 \in [\IND_{\{\poch_\bi f \neq 0\}} / 2^{2r-2}, \IND_{\{\poch_\bi f \neq 0\}}]$, so taking expectation yields (i).

For (ii), again, by \cref{der_int_mult} we obtain
\[\infl_\bi(f) = \EX [ |\poch_\bi f|^2] \geq \EX\Big[\frac{1}{2^{r-1}} |\poch_\bi f|\Big] \geq \frac{1}{2^{r-1}} | \EX [\poch_\bi f ] | = \frac{1}{2^{r-1}} |\ff(\bi)|. \]
\end{proof}

\begin{prop} \label{derinotzero}
Let $f\boolf \{-1,1\}$ and a non-empty $\bi \subset [n]$ satisfy $\poch_\bi f(x) \neq 0$ for some $x \in \{-1,1\}^n$. Then every $i \in \bi$ is $\bi$-pivotal for $f$ on $x$. In particular, $\jinfl_\bi(f) \geq \infl_\bi(f)$.
\end{prop}

\begin{proof}
Take an arbitrary $i \in \bi$, and denote by $y^+ = y^{i\to +1}$ and $y^- = y^{i\to -1}$. By \cref{deri_sum_formula} we have
\begin{align*}
\poch_\bi f (x) = (\poch_i (\poch_{\bi \setminus i} f)) (x) &= 
\frac{\poch_{\bi \setminus i} f (x^{i\to +1}) - \poch_{\bi \setminus i} f (x^{i\to -1})}{2} \\
&= \frac{1}{2^{|\bi|}} \sum_{y \in \{-1,1\}^{\bi \setminus i}} \Big( f(x^{\bi \to y^+}) - f(x^{\bi \to y^-}) \Big) \prod_{j \in \bi \setminus i} y_j.
\end{align*}
Since $\poch_\bi f(x) \neq 0$, there exists a non-zero summand in the above sum. As a result, $i$ is $\bi$-pivotal for $f$ on $x$, which holds for every $i \in \bi$.

For the inequality part, the above implies $\jinfl_{\bi}(f) \geq \PP(\poch_\bi f \neq 0)$ and so by \cref{infl_lower_bd} we get $\jinfl_{\bi}(f) \geq \infl_\bi(f)$.
\end{proof}

\begin{remark}
    For $\bi$ of size $1$ or $2$ one can check that the first part of the proposition is actually an equivalence, in which case $\jinfl_\bi(f) = \PP(\poch_\bi f \neq 0)$.
\end{remark}

Let us introduce for brevity two more pieces of notation.
\begin{defn}
Let $f \boolf \RR$ and $d \leq n$ be a positive integer. We define
\begin{equation*}
\totinf(f) := \sum_{i \in [n]} \infl_{i}(f) \quad \text{and} \quad \maxinf_d (f) := \max_{\bi \in [n]_d} \infl_\bi(f).
\end{equation*}
\end{defn}

\subsection{Hypercontraction and the log-Sobolev inequality}

The key element of the proofs of the KKL, FKN and Kindler--Safra theorems is the celebrated hypercontractivity theorem by Bonami~\cite{B70}, presented below. It is also the central ingredient of our proofs of \cref{Main Theorem,FKN-type theorem}.

\begin{defn}
Let $f\boolf \RR$ and $t\geq 0$. We define $P_t f \boolf \RR$ by
\begin{equation*}
P_t f := \sum_{\bi \subseteq [n]} \e^{-|\bi|t} \ff(\bi) \chi_\bi.
\end{equation*}
The family of operators $(P_t)_{t\geq 0}$ is often called \emph{the heat semigroup} on the Boolean cube.
\end{defn}

\begin{theorem}[\cite{B70}]\label{hiperro}
Let $f \boolf \RR$ and $t\geq 0$. Then
\begin{equation*}
\|P_t f \|_2 \leq \|f\|_{1+\e^{-2t}}.
\end{equation*}
\end{theorem}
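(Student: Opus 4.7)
My plan is to prove this classical inequality in the standard two-step manner: reduce the $n$-variable statement to the one-variable case via tensorization, then establish the one-variable case --- the celebrated \emph{two-point inequality} --- by direct calculation. For the first ingredient, note that from the Fourier--Walsh definition, $P_t$ factorises as a tensor product $P_t = P_t^{(1)} \otimes \cdots \otimes P_t^{(n)}$ with respect to the decomposition $\eldwa = L^2(\{-1,1\}, \mu_1)^{\otimes n}$, where each $P_t^{(i)}$ acts on an affine function of the $i$-th coordinate via $P_t^{(i)}(a + b x_i) = a + \e^{-t} b x_i$; this is immediate from $\chi_\bi = \prod_{i \in \bi} x_i$.

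Set $q := 1 + \e^{-2t} \in [1,2]$. I would prove the standard tensorisation fact: if $T_j \colon L^q(\Omega_j) \to L^2(\Omega_j)$ is a contraction for $j = 1, 2$, then so is $T_1 \otimes T_2$ on the product space. The argument applies $T_2$ in coordinate~$2$ first to obtain an outer $L^2_{x_1}$ norm of an inner $L^q_{x_2}$ norm, invokes Minkowski's integral inequality --- valid precisely because $2/q \geq 1$ --- to swap the two norms, and finally applies the contractivity of $T_1$ in coordinate~$1$. Iterating $n-1$ times reduces \cref{hiperro} to its one-variable case.

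It remains to show, for any $f \colon \{-1,1\} \to \RR$ written as $f(x) = a + bx$ and with $\rho := \e^{-t}$, that $a^2 + \rho^2 b^2 \leq \bigl(\tfrac{1}{2}|a+b|^q + \tfrac{1}{2}|a-b|^q\bigr)^{2/q}$. By homogeneity and a sign flip I may assume $a \geq |b| \geq 0$; then, writing $u := b/a \in [0,1]$, the inequality reduces to the scalar estimate
\[
\bigl(1 + (q-1) u^2\bigr)^{q/2} \;\leq\; \tfrac{1}{2}\bigl((1+u)^q + (1-u)^q\bigr), \qquad u \in [0,1].
\]
I would prove this by expanding both sides as a power series in $u$ and comparing coefficients: the left-hand side equals $\sum_k \binom{q/2}{k} (q-1)^k u^{2k}$ and the right-hand side equals $\sum_k \binom{q}{2k} u^{2k}$. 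The coefficients agree for $k \in \{0,1\}$, and for $k \geq 2$ a sign analysis (noting that $\binom{q/2}{k}$ has sign $(-1)^{k-1}$ when $1 < q < 2$) shows the nontrivial comparison is required only for odd $k \geq 3$, which I would handle by an elementary manipulation of ratios of consecutive coefficients.

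The reduction in the first two steps is routine; the substance lies in the final step, a purely calculus inequality that is delicate owing to the non-integer exponent $q$ and the mixed signs of the Taylor coefficients, so the coefficient-by-coefficient bookkeeping has to be done carefully. If that direct route became awkward, my fallback would be Gross's theorem: verify the logarithmic Sobolev inequality $\mathrm{Ent}(f^2) \leq 2\, \mathcal{E}(f,f)$ for the two-point Bernoulli measure (an easy one-variable convexity exercise), tensorise trivially to the $n$-fold product, and deduce \cref{hiperro} from Gross's equivalence between the log-Sobolev inequality with the correct constant and hypercontractivity of the associated heat semigroup.
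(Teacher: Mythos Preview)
The paper does not prove this statement at all: \cref{hiperro} is simply quoted as a known result with a citation to Bonami~\cite{B70}, so there is no ``paper's own proof'' to compare against. Your outline is the standard textbook route (tensorisation plus the two-point inequality, as in O'Donnell's book), and the structure is sound: the factorisation of $P_t$, the Minkowski swap (correctly using $q\leq 2$), and the reduction of the two-point case to the scalar inequality $(1+(q-1)u^2)^{q/2}\leq\tfrac12\bigl((1+u)^q+(1-u)^q\bigr)$ are all right.

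Two points where your sketch is thin. First, the phrase ``by homogeneity and a sign flip I may assume $a\geq|b|\geq 0$'' hides a genuine step: homogeneity and replacing $f$ by $\pm f$ and $x$ by $-x$ only give $a,b\geq 0$, not $a\geq b$. When $b>a\geq 0$ one typically passes to $|f|$, which has Fourier coefficients $(b,a)$, notes that $\|f\|_q=\||f|\|_q$ and that $a^2+\rho^2 b^2\leq b^2+\rho^2 a^2$ since $\rho\leq 1$, and then applies the already-proved case. Second, the coefficient comparison for odd $k\geq 3$ is only asserted (``elementary manipulation of ratios''); this is indeed elementary but is the crux of the computation and would need to be written out. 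Your fallback via Gross's log-Sobolev equivalence is also a valid and standard alternative.
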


The second important fact we make use of is \cref{logsob}---a weak functional version of the edge isoperimetric inequality---which is a consequence of the classical log-Sobolev inequality. We will use it in proving \cref{FKN-type theorem}.

\begin{lemma} \label{logsob}
If $f\boolf \{0,1\}$, then 
$\totinf(f) \geq \frac{1}{2} \EXbn f \cdot \ln(1 / \EXbn f).$
\end{lemma}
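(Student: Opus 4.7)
The plan is to derive \cref{logsob} as a direct consequence of the standard log-Sobolev inequality on the Boolean hypercube, using crucially that $f$ is $\{0,1\}$-valued. Recall that log-Sobolev states that for every $g \boolf \RR$,
\[
\mathrm{Ent}(g^2) \leq 2\, \totinf(g),
\]
where $\mathrm{Ent}(h) := \EX[h \ln h] - \EX[h]\ln \EX[h]$ for $h \geq 0$, with the convention $0\ln 0 := 0$. This inequality is well known to be equivalent to \cref{hiperro} via differentiation at $t = 0$ (see e.g.\ \cite{O'D14}, Chapter 10), so it is available to us essentially for free in the present setting.

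Next I would apply this inequality with $g = f$. Since $f$ is $\{0,1\}$-valued, we have $f^2 = f$ pointwise, so $\mathrm{Ent}(f^2) = \mathrm{Ent}(f)$. Using $\ln 1 = 0$ together with the convention $0 \ln 0 = 0$, the term $\EX[f \ln f]$ vanishes, and hence
\[
\mathrm{Ent}(f) = -\EX[f]\ln \EX[f] = \EX[f]\,\ln(1/\EX[f]).
\]
Substituting into log-Sobolev and dividing by $2$ yields precisely $\totinf(f) \geq \tfrac{1}{2} \EX[f] \ln(1/\EX[f])$, as required. (If $\EX[f] = 0$ the bound is trivial under the convention $0 \cdot \ln \infty := 0$.)

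There is essentially no real obstacle; the only point requiring a moment's care is to confirm the normalisation, so that the Dirichlet form appearing on the right-hand side of log-Sobolev indeed equals $\totinf(g) = \sum_i \|\poch_i g\|_2^2$. This is immediate from the identity $\poch_i g(x) = (g(x^{i\to +1}) - g(x^{i\to -1}))/2$ together with Parseval. If one preferred a self-contained argument rather than citing log-Sobolev directly, the standard route is to prove the two-point log-Sobolev inequality on $(\{-1,1\}, \mu_1)$ by an elementary one-variable calculation, and then tensorise to the $n$-cube; this would add only a few lines but would make the lemma independent of outside references.
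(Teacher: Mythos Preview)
Your proposal is correct and matches the paper's indicated approach: the paper does not supply a proof but refers to Section~5 of \cite{Ol21} and explicitly describes the lemma as ``a consequence of the classical log-Sobolev inequality,'' which is precisely the derivation you give (apply log-Sobolev to $g=f$ and use $f^2=f$).
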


For the proof and background of the lemma we refer the reader to Section 5 of \cite{Ol21}.

\section{Proof of \cref{Main Theorem}} \label{sec:kklg}

In this section we prove \cref{Main Theorem}. Our proof proceeds via two propositions. The first, Proposition~\ref{kklprop1}, provides an integral representation for the higher order variance $\bW^{\geq d}(f)$ via the heat semi-group operator $P_t$. The second, Proposition~\ref{kklprop2}, uses hypercontractivity and gives a bound on integrals of the representation of the higher order variance in terms of influences. These propositions are generalisations of Lemmas 3.1 and 3.2 from~\cite{Ol21}. The proofs presented proceed in a fairly analogous manner to those in \cite{Ol21}.
\begin{prop} \label{kklprop1}
Let $n \geq d \geq 1$ be integers. For every $f\boolf \RR$ we have
\begin{equation*}
\bW^{\geq d}(f) = \sum_{\bi \in [n]_{\geq d}} \ff(\bi)^2 = 2d \sum_{\bi \in [n]_d} \int_0^\infty (\e^{2t}-1)^{d-1} \e^{-2dt} \|P_t \poch_\bi f \|_2^2 \dt.
\end{equation*}
\end{prop}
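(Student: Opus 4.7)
The plan is to expand the right-hand side in the Fourier--Walsh basis, swap order of summation and integration, and reduce the inner $t$-integral to a Beta-function computation.

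First, I would use \eqref{pochodna} and the definition of $P_t$ to write
\begin{equation*}
P_t \poch_\bi f = \sum_{\bj \supseteq \bi} \e^{-(|\bj|-d)t} \ff(\bj)\, \chi_{\bj \setminus \bi},
\end{equation*}
so by Parseval (applied to the orthonormal system $\{\chi_{\bj\setminus\bi}\}_\bj$),
\begin{equation*}
\|P_t \poch_\bi f\|_2^2 \;=\; \sum_{\bj \supseteq \bi} \e^{-2(|\bj|-d)t} \ff(\bj)^2.
\end{equation*}
Summing over $\bi \in [n]_d$ and using Tonelli/Fubini (everything is nonnegative), each $\bj \in [n]_{\geq d}$ appears exactly $\binom{|\bj|}{d}$ times while $\bj \in [n]_{<d}$ do not appear at all, giving
\begin{equation*}
\sum_{\bi \in [n]_d} \|P_t \poch_\bi f\|_2^2 \;=\; \sum_{\bj \in [n]_{\geq d}} \binom{|\bj|}{d} \e^{-2(|\bj|-d)t} \ff(\bj)^2.
\end{equation*}

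Next, I would swap sum and integral and evaluate the $t$-integral for a single frequency $\bj$ with $k := |\bj| \geq d$. Collecting exponents, one needs
\begin{equation*}
I_{d,k} \;:=\; 2d \int_0^\infty (\e^{2t}-1)^{d-1} \e^{-2kt}\, \dt.
\end{equation*}
The substitution $u = 1 - \e^{-2t}$ (so $\du = 2\e^{-2t}\dt$ and $\e^{2t}-1 = u/(1-u)$) turns this into the Beta integral
\begin{equation*}
I_{d,k} \;=\; d \int_0^1 u^{d-1} (1-u)^{k-d}\, \du \;=\; d\, B(d, k-d+1) \;=\; \binom{k}{d}^{-1}.
\end{equation*}
Thus the factor $\binom{|\bj|}{d}$ from the double sum cancels exactly with $I_{d,|\bj|}$, leaving $\sum_{\bj \in [n]_{\geq d}} \ff(\bj)^2 = \bW^{\geq d}(f)$, as required.

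I do not expect a serious obstacle: the argument is essentially bookkeeping. The only step that demands care is the exponent arithmetic in reducing $(\e^{2t}-1)^{d-1}\e^{-2dt}\e^{-2(|\bj|-d)t}$ so that the substitution $u=1-\e^{-2t}$ yields a clean Beta integrand, and checking that $d\,B(d,k-d+1) = 1/\binom{k}{d}$. The interchange of sum and integral is justified by nonnegativity, and convergence at $t=\infty$ is automatic because $k \geq d \geq 1$ makes the integrand decay exponentially.
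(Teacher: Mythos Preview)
Your proposal is correct and follows essentially the same route as the paper: expand $P_t\poch_\bi f$ in the Walsh basis, apply Parseval, swap the sum over $\bi\in[n]_d$ with the sum over $\bj\supseteq\bi$ to produce the factor $\binom{|\bj|}{d}$, and then reduce the $t$-integral to a Beta function. The only cosmetic difference is that the paper substitutes $u=\e^{-2t}$ rather than your $u=1-\e^{-2t}$, arriving at the same identity $\int_0^\infty(\e^{2t}-1)^{d-1}\e^{-2kt}\,\dt = \tfrac{1}{2}B(d,k-d+1)=\big(2d\binom{k}{d}\big)^{-1}$.
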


\begin{proof}
Since for $\bi \in [n]_d$ we have
\begin{gather*}
P_t \poch_\bi f = P_t \Bigg(\!\!\sum_{\substack{\ \bk \subseteq [n]\!\colon\\ \bi \subseteq \bk}} \ff(\bk) \chi_{\bk \setminus \bi}\Bigg) = \sum_{\substack{\ \bk \subseteq [n]\!\colon\\ \bi \subseteq \bk}} \e^{-(|\bk|-d)t} \ff(\bk) \chi_{\bk \setminus \bi},
\end{gather*}
by the Parseval identity we obtain
\begin{align*}
\sum_{\bi \in [n]_{d}} \e^{-2dt} \|P_t \poch_\bi f \|_2^2 &= \sum_{\bi \in [n]_d} \e^{-2dt} \sum_{\substack{\ \bk \subseteq [n]\!\colon\\ \bi \subseteq \bk}} \e^{-2(|\bk|-d)t} \ff(\bk)^2 = \sum_{\bi \in [n]_d} \sum_{\substack{\ \bk \subseteq [n]\!\colon\\ \bi \subseteq \bk}} \e^{-2|\bk|t} \ff(\bk)^2 =\\
&= \sum_{\bk \in [n]_{\geq d}} \sum_{\substack{\ \bi \in [n]_d\!\colon\\ \bi \subseteq \bk}} \e^{-2|\bk|t} \ff(\bk)^2 = \sum_{\bk \in [n]_{\geq d}} {|\bk| \choose d} \e^{-2|\bk|t} \ff(\bk)^2.
\end{align*}
Therefore,
\begin{align*}
\sum_{\bi \in [n]_d} \int_0^\infty (\e^{2t}-1)^{d-1} \e^{-2dt} \|P_t \poch_\bi f \|_2^2 \dt = \sum_{\bk \in [n]_{\geq d}} {|\bk| \choose d} \ff(\bk)^2 \int_0^\infty (\e^{2t}-1)^{d-1} \e^{-2|\bk|t} \dt.
\end{align*}
Substituting $u = \e^{-2t}$, $u \in (0,1]$, i.e.\ $t = -\frac{1}{2}\ln u$ with $\big|\frac{\dt}{\du}\big| = \frac{1}{2u}$, we get
\begin{align*}
\int_0^\infty (\e^{2t}-1)^{d-1} \e^{-2|\bk|t} \dt = \int_0^1 \Big(\frac{1}{u}-1\Big)^{d-1}  u^{|\bk|} \cdot \frac{\du}{2u} &= \frac{1}{2} \int_0^1 (1-u)^{d-1}u^{|\bk|-d} \du = \\ 
&= \frac{1}{2} \textrm{B}(d, |\bk|-d+1) = \frac{1}{2d \cdot {|\bk| \choose d}},
\end{align*}
which yields the statement.
\end{proof}

\begin{prop} \label{kklprop2}
Let $n$, $d$, $l$ be integers such that $d \geq l \geq 1$. Assume that $f\boolf \RR$ and some $\bi \subset [n]$ satisfy $\imag(\poch_\bi f) \subset \frac{1}{2^{d-1}}\ZZ$ and $\infl_\bi(f) \leq 1$. Then
\begin{gather*}
\int_0^\infty (\e^{2t}-1)^{l-1}\e^{-2lt} \| P_t \poch_\bi f \|^2 \dt \ \ \leq \ \ (l-1)! 4^{d-1} \cdot \frac{\infl_\bi(f)}{\ln^l(1 / \infl_\bi(f))}.
\end{gather*}
\end{prop}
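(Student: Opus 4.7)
The proof follows the classical hypercontractive strategy of Kahn--Kalai--Linial adapted to the multi-bit setting, essentially as in Lemma 3.2 of \cite{Ol21}. Set $g := \poch_\bi f$, $s := \infl_\bi(f) = \|g\|_2^2 \leq 1$, and $\lambda := \ln(1/s) \geq 0$. The key idea is to combine Bonami's inequality with the structural constraint $\imag(g) \subset \tfrac{1}{2^{d-1}}\ZZ$ in order to interpolate between $L^2$ and a smaller $L^p$ norm, and then reduce the $t$-integral to a Gamma-type expression via well-chosen substitutions.

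First, I would exploit the quantisation hypothesis. Since $|g|$ is either zero or at least $2^{-(d-1)}$, the pointwise bound $|g|^p \leq 2^{(d-1)(2-p)} |g|^2$ holds for every $p \in [1,2]$, giving $\|g\|_p^p \leq 2^{(d-1)(2-p)} s$. Feeding this into \cref{hiperro} yields $\|P_t g\|_2^2 \leq \|g\|_{1+u}^2 \leq 4^{(d-1)(1-u)/(1+u)} s^{2/(1+u)}$, where $u := \e^{-2t}$. The substitution $u = \e^{-2t}$ converts the original integral into $\tfrac{1}{2} \int_0^1 (1-u)^{l-1} \|P_t g\|_2^2 \, du$. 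A further substitution $v := (1-u)/(1+u) \in [0,1]$ simplifies the relevant quantities: $2/(1+u) = 1+v$, $(1-u)/(1+u) = v$, and $(1-u)^{l-1}\,du$ transforms (up to sign) to $\tfrac{2^l v^{l-1}}{(1+v)^{l+1}}\,dv$. The resulting expression is proportional to $s \int_0^1 \frac{v^{l-1}}{(1+v)^{l+1}} (4^{d-1} s)^v \, dv$.

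Using $(1+v)^{l+1} \geq 1$ and extending the integration to $[0, \infty)$, I would invoke the Gamma identity $\int_0^\infty v^{l-1} \e^{-(\lambda - (d-1)\ln 4) v}\,dv = (l-1)!/(\lambda - (d-1)\ln 4)^l$, valid when $s \leq 4^{-(d-1)}$. This already carries the correct shape $(l-1)!\,4^{d-1} s/\lambda^l$ modulo transferring $\lambda - (d-1)\ln 4$ back to $\lambda$. For $s$ in the complementary range (so $\lambda$ is bounded by a constant depending on $d,l$), the naive estimate $\|P_t g\|_2^2 \leq s$ combined with the explicit evaluation $\int_0^\infty (\e^{2t}-1)^{l-1} \e^{-2lt}\,\dt = \tfrac{1}{2l}$ (via the same substitution $u = \e^{-2t}$ followed by a Beta integral) already beats the target bound.

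The principal technical obstacle is delicate constant management: transferring the denominator $\lambda - (d-1) \ln 4$ to $\lambda$ and gluing the two regimes without inflating the clean constant $(l-1)!\, 4^{d-1}$. The careful interplay between the substitution $v = (1-u)/(1+u)$, the Beta-type factor $(1-u)^{l-1}\,du$, and the Bonami exponent $(1-u)/(1+u)$ is what ultimately pins down the constant, and choosing when to retain and when to discard the $(1+v)^{l+1}$ denominator is what decides whether one obtains the stated $4^{d-1}$ factor or an inflated one.
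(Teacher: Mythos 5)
Your opening moves coincide with the paper's own proof: the quantisation bound $\EX[|\poch_\bi f|^{1+u}] \leq 2^{(d-1)(1-u)}\infl_\bi(f)$, Bonami's inequality (\cref{hiperro}), and your composite substitution $u=\e^{-2t}$, $v=(1-u)/(1+u)$ is exactly the paper's single change of variables $v=\tfrac{1-\e^{-2t}}{1+\e^{-2t}}$; both routes arrive at the bound $2^{l-1} s \int_0^1 \tfrac{v^{l-1}}{(1+v)^{l+1}}\, 4^{(d-1)v} s^{v}\,\mathrm{d}v$ with $s=\infl_\bi(f)$. The gap is in the endgame. By discarding $(1+v)^{l+1}\geq 1$ and folding $4^{d-1}$ into the exponential base you obtain $2^{l-1}(l-1)!\,s/(\lambda-(d-1)\ln 4)^{l}$ with $\lambda=\ln(1/s)$, and this does \emph{not} imply the stated inequality when $s$ approaches $4^{-(d-1)}$ from below: the denominator tends to $0$ while the target stays finite, so ``transferring $\lambda-(d-1)\ln 4$ back to $\lambda$'' is impossible near that boundary, and your complementary regime only begins at $s>4^{-(d-1)}$, so the two regimes do not glue. (Even away from the boundary, say $\lambda\geq 2(d-1)\ln 4$, the transfer costs a factor $2^{l}$, giving $2^{2l-1}$ in place of $4^{d-1}$, which already overshoots when $l=d$; and your claim that the trivial bound $s/(2l)$ beats the target for $\lambda<(d-1)\ln 4$ amounts to $((d-1)\ln 4)^{l}\leq 2\,l!\,4^{d-1}$, true but unverified in your sketch.) You flag all of this as ``constant management'' and leave open whether $4^{d-1}$ survives, so as written the proposition, whose constant is explicit, is not established.

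The missing idea is to make the opposite choice at the very decision point you identify: \emph{retain} the denominator $(1+v)^{l+1}$, bound it below by $2^{(l+1)v}$ (i.e.\ $2^{v}\leq 1+v$ on $[0,1]$), and absorb the remaining $v$-dependent power of two into a constant using $d\geq l$: for $v\in[0,1]$ one has $2^{l-1}\,4^{(d-1)v}/(1+v)^{l+1}\leq 2^{l-1}2^{(2d-l-3)v}\leq 2^{2d-2}=4^{d-1}$. This leaves the factor $s^{v}$ with base $s$ intact, so extending the integral to $[0,\infty)$ gives exactly $\int_0^\infty v^{l-1}s^{v}\,\mathrm{d}v=(l-1)!/\ln^{l}(1/s)$, and the stated bound follows for every $s\in(0,1)$ with no case split at all (the cases $s\in\{0,1\}$ are handled by the stated conventions). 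This is precisely how the paper's proof closes; no regime splitting or denominator transfer is needed.
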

\begin{remark}
When $\infl_\bi(f) \in \{0,1\}$ we let $1 \cdot (\ln 1/1)^{-l} = +\infty$ and $0 \cdot (\ln 1/0)^{-l} = 0$.
\end{remark}

\begin{remark}
The seemingly artificial assumption that $\imag(\poch_\bi f) \subset \tfrac{1}{2^{d-1}} \ZZ$ is satisfied already for $f$ Boolean and $\bi \in [n]_d$, due to \cref{der_int_mult}. The assumption takes such a form since it will also be used in the proof of \cref{FKN-type theorem}, where a slightly more general form is needed.
\end{remark}

\begin{proof}
If $\infl_\bi(f) = 1$, then the right-hand side equals $+\infty$. If $\infl_\bi(f) = 0$, then $\poch_\bi f \equiv 0$, so the integral is equal to $0$. Therefore, in both cases the statement is true. Henceforth, we will assume that $\infl_\bi(f) \in (0,1)$.
 
The Bonami hypercontractivity bound applied to the function $\poch_\bi f$ gives that
\begin{equation} \label{eqhyper}
\|P_t \poch_\bi f\|_2^2 \leq \| \poch_\bi f \|^2_{1+\e^{-2t}} = \left(\EX [|\poch_\bi f|^{1+\e^{-2t}}]\right)^{\frac{2}{1+\e^{-2t}}}.
\end{equation}
Observe that for every $v \in \imag(\poch_\bi f) \subseteq \frac{1}{2^{d-1}} \ZZ$, we have
\begin{equation*}
|v|^{1+\e^{-2t}} \leq 2^{(d-1)(1-\e^{-2t})} \cdot v^2.
\end{equation*}
This yields that
\begin{equation} \label{eq2hyper}
\EX [|\poch_\bi f|^{1+\e^{-2t}}] \ \leq \ 2^{(d-1)(1-\e^{-2t})} \EX[(\poch_\bi f)^2] \ = \ 2^{(d-1)(1-\e^{-2t})} \cdot \infl_\bi(f).
\end{equation}
Due to (\ref{eqhyper}) and (\ref{eq2hyper}), we obtain
\begin{equation} \label{boundzior}
\|P_t \poch_\bi f\|_2^2 \leq (2^{2(d-1)})^{\frac{1-\e^{-2t}}{1+\e^{-2t}}} \cdot \infl_\bi(f)^{1+\frac{1-\e^{-2t}}{1+\e^{-2t}}} = 2^{2(d-1)u} \cdot \infl_\bi(f)^{1+u},
\end{equation}
where we substituted $u = \frac{1-\e^{-2t}}{1+\e^{-2t}}$, i.e.\ $t = \frac{\ln(1+u)-\ln(1-u)}{2}$. As $t \in [0,\infty)$, we have $u \in [0,1)$ and $\frac{\dt}{\du} = \frac{1}{(1-u)(1+u)}$. Using bound the (\ref{boundzior}) we get
\begin{align*}
\int_0^\infty (\e^{2t}-1)^{l-1}\e^{-2lt} \| P_t \poch_\bi f \|^2 \dt \leq \int_0^1 \left(\frac{2u}{1-u}\right)^{l-1} \left(\frac{1-u}{1+u}\right)^l 2^{2(d-1)u}  \frac{\infl_\bi(f)^{1+u}}{(1+u)(1-u)} \du  \\
\leq 4^{d-1}\infl_\bi(f) \int_0^1 \left(\frac{2^u}{1+u}\right)^{l+1} u^{l-1} \infl_\bi(f)^u \du \leq 4^{d-1}\infl_\bi(f) \int_0^\infty u^{l-1} \infl_\bi(f)^u \du,
\end{align*}
where we used the bounds $2^{l-1+2(d-1)u} \leq 2^{2d-2+(l+1)u}$
and $2^u \leq 1+u$ for $u \in [0,1]$.

Finally,
\begin{align*}
\int_0^\infty u^{l-1} \infl_\bi(f)^u \du = \int_0^\infty u^{l-1} \e^{u \ln(\infl_\bi(f))} \du = \frac{1}{\ln^l(1 / \infl_\bi(f))} \int_0^\infty t^{l-1}\e^{-t} \dt = \frac{(l-1)!}{\ln^l(1 / \infl_\bi(f))},
\end{align*}
which finishes the proof.
\end{proof}

We are ready to prove \cref{Main Theorem}.

\begin{proof}[Proof of \cref{Main Theorem}]
For $n=1$ the statement is trivial. Therefore, let us assume that $n \geq 2$.

Observe that for $\bi \in [n]_d$ we have $\imag(\poch_\bi f) \subset \ZZ/2^{d-1}$. Hence, by \cref{kklprop1,kklprop2} for $l=d$ and all $\bi \in [n]_d$ we obtain
\begin{equation*}
\bW^{\geq d}(f) = 2d \sum_{\bi \in [n]_d} \int_0^\infty (\e^{2t}-1)^{d-1} \e^{-2dt} \|P_t \poch_\bi f \|_2^2 \dt \leq d!2^{2d-1} \sum_{\bi \in [n]_d} \frac{\infl_\bi(f)}{\ln^d(1/\infl_\bi(f))}.
\end{equation*}
Since the function $t \mapsto t(\ln(1/t))^{-d}$ is increasing on $[0,1]$, we get
\begin{equation} \label{kklfinal}
\bW^{\geq d}(f) \leq d!2^{2d-1} {n \choose d} \frac{\maxinf_d(f)}{\ln^d(1/\maxinf_d(f))} \leq 2^{2d-1} n^d \frac{\maxinf_d(f)}{\ln^d(1/\maxinf_d(f))}.
\end{equation}
Assume that $\maxinf_d(f) \leq \frac{1}{n^{d/2}}$. The function $t \mapsto (\ln(1/t))^{-d}$ is increasing on $[0,1]$, so by~(\ref{kklfinal}) we obtain
\begin{equation*}
\bW^{\geq d}(f) \leq 2^{2d-1} n^d \cdot \frac{\maxinf_d(f)}{((d/2)\ln n)^d} = \frac{2^{3d-1}}{d^d} \cdot \left(\frac{n}{\ln n}\right)^d \maxinf_d(f) \leq 10 \left(\frac{n}{\ln n}\right)^d \maxinf_d(f),
\end{equation*}
and so the proof in this case is done.

On the other hand, if $\maxinf_d(f) > \frac{1}{n^{d/2}}$, then since $\ln n \leq \sqrt{n}$ for all $n \in \ZZ_+$, we have
\begin{equation*}
\maxinf_d(f) > \frac{1}{n^{d/2}} \geq \left(\frac{\ln n}{n}\right)^d \geq \bW^{\geq d}(f) \cdot \left(\frac{\ln n}{n}\right)^d,
\end{equation*}
as $\bW^{\geq d}(f) \leq \|f\|_2^2 = 1$, which finishes the proof.
\end{proof}

\section{The hypertribe example}\label{sec:ex}

In the following section, we show the essential sharpness of the lower-bound in \cref{Main Theorem}. More precisely, for every $d \geq 2$ we construct a family of \emph{hypertribe} functions $H_n^d$ satisfying $\maxinf_d(H_n^d) \leq C \bW^{\geq d}(H_n^d) (\frac{\log n}{n})^d$ for every $n \geq d$ and some constant $C=C(d) > 0$. The construction we present was suggested to us by Peter Keevash.

Let us elaborate on some quantities behind the construction and why its $1$-degree prototype does not work.

In the case of the KKL theorem, the sharpness is provided by the (standard) tribe functions due to Ben-Or and Linial~\cite{BL85}. For the construction, one splits $n$ voters (bits) into disjoint tribes of sizes roughly $\log (\frac{n}{\log n})$ and declares the outcome (i.e.\ output of the function) to be $+1$ if there is a tribe such that all its members voted $+1$, and to be $-1$, if there is no such tribe. 

The standard tribe functions, as noted in~\cite{Ol21}, do not give sharpness in \cref{Main Theorem} already for $d=2$. In this case, although the influences of pairs of voters from distinct tribes are of the (desired) order $(\tfrac{\log n}{n})^2$, the influences of pairs of voters from the same tribe are of order $\tfrac{\log n}{n}$. If one accounts for that, and changes the sizes of the tribes to $2\log (\frac{n}{\log n})$, the variance (and thus also $\bW^{\geq 2}$) drops down from constant to $O(\tfrac{\log n}{n})$ magnitude, which is again not enough to get sharpness. 

In order to make up for this drop, we need to increase the number of tribes. This comes at the cost of tribes no longer being disjoint, which in turn may increase the order of magnitude of the influences. To keep them of the desired small order, we need to guarantee that for every pair of voters (or, more generally, every $d$-tuple) they are not simultaneously members of too many tribes. Fortunately, the Kuzjurin result (the second part of Theorem 1 in~\cite{K95}) regarding existence of certain asymptotically good packings guarantees the existence of the desired functions.

We start by stating \cref{kuzju}, a special case of Theorem 1 from~\cite{K95}. Based on this, in \cref{hyperfn} we construct the hypertribe functions $H_n^d$. In \cref{properties}, we state some basic quantitative observations regarding the structure of~$H_n^d$. In \cref{prop111,prop333,prop444} we prove bounds on the influences of $H_n^d$ and the probabilities of the outcomes $\pm1$. Finally, in \cref{hyperthm} we collect the proven bounds and show that the hypertribes indeed asymptotically attain the lower bound from \cref{Main Theorem}.

\begin{theorem}[\cite{K95}]\label{kuzju} Fix $d \geq 2$. Assume that $k(n)$ satisfies $k(n) \stackrel{n \to \infty}{\longrightarrow} \infty$ and $k(n) \leq c\sqrt{n}$ for some positive constant $c<1$ and sufficiently large $n$. Then there exists $N > 0$ such that for every $n > N$ there is a family $\mathcal{F}_n$ of $k(n)$-element subsets of~$[n]$ such that 
\begin{enumerate}
\item[($\ast$)] for at least half of the $d$-sets $\bi \in [n]_d$ there is a set $A_\bi \in \mathcal{F}_n$ such that $\bi \subset A_\bi$;
\item[($\ast\ast$)] for every $d$-set $\bi \in [n]_d$ there is at most one set $A \in \mathcal{F}_n$ such that $\bi \subset A$. 
\end{enumerate}
\end{theorem}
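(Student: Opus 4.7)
The plan is to recast the statement as a near-perfect matching problem and invoke the R\"odl nibble, in the guise of the Pippenger--Spencer theorem. Consider the auxiliary hypergraph $\cH = \cH(n,k,d)$ with vertex set $[n]_d$ and hyperedges indexed by $[n]_k$: the hyperedge corresponding to $A \in [n]_k$ consists of all $\bi \in [n]_d$ with $\bi \subset A$. Then $\cH$ is $r$-uniform for $r := \binom{k}{d}$ and $D$-regular for $D := \binom{n-d}{k-d}$. A matching $\cM$ in $\cH$ is precisely a subfamily of $[n]_k$ satisfying $(\ast\ast)$, and $|\cM|$ such sets cover exactly $r|\cM|$ distinct $d$-sets, so $(\ast)$ reduces to producing a matching of size at least $\binom{n}{d}/(2r)$.

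To obtain such a matching I would appeal to the Pippenger--Spencer theorem: an $r$-uniform, $D$-regular hypergraph on $V$ vertices whose maximum pairwise codegree is $o(D)$ admits, as $D \to \infty$, a matching of size $(1-o(1))V/r$. The codegrees in $\cH$ are easy to bound: two distinct $\bi,\bj \in [n]_d$ with $|\bi\cap\bj| = i$ lie in exactly $\binom{n-2d+i}{k-2d+i}$ common hyperedges, and this is maximised at $i = d-1$, giving
\begin{equation*}
\binom{n-d-1}{k-d-1} \;=\; \frac{k-d}{n-d}\,D \;=\; O(k/n)\cdot D.
\end{equation*}
Under the hypothesis $k(n) \leq c\sqrt{n}$ with $c<1$ this is $O(n^{-1/2})\cdot D = o(D)$, while $k(n)\to\infty$ guarantees $D \to \infty$ and $r \to \infty$. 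Pippenger--Spencer then applies and yields, for all $n$ sufficiently large, a matching of size $(1-o(1))\binom{n}{d}/r > \binom{n}{d}/(2r)$. Taking $\cF_n$ to be the family of $k$-sets corresponding to the hyperedges of this matching completes the proof.

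The genuinely hard step is of course the Pippenger--Spencer theorem itself, whose proof goes through the R\"odl nibble: one constructs the matching in many short rounds, each round selecting a thin random subset of remaining hyperedges and keeping those pairwise disjoint, while maintaining near-regularity of the residual hypergraph via martingale concentration (Azuma or Talagrand). For $k=k(n)$ growing with $n$ this requires controlling accumulated concentration errors uniformly across rounds, and the $k \leq c\sqrt{n}$ restriction most likely enters Kuzjurin's analysis at this quantitative stage, keeping the codegree-to-degree ratio polynomially small in $n$ with enough slack to absorb the $o(1)$ losses incurred over the many rounds. Since condition $(\ast)$ is qualitative (``at least half''), one does not in fact need the full asymptotic optimality of Pippenger--Spencer; any truncated nibble giving a matching of linear-in-optimum size would already suffice, which should simplify the bookkeeping if one wished to write out a self-contained argument.
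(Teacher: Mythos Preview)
The paper does not prove this theorem: it is quoted as a special case of Theorem~1 in Kuzjurin's paper~\cite{K95} and used as a black box for the hypertribe construction. So there is no ``paper's own proof'' to compare your proposal against.

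Your sketch is the correct one and is in the spirit of how such packing results (including Kuzjurin's) are proved: recasting $(\ast\ast)$-families as matchings in the auxiliary $d$-set hypergraph and running a R\"odl-nibble argument. Your codegree computation is right, and under the stated hypotheses the codegree-to-degree ratio is $O(k/n) = O(n^{-1/2}) = o(1)$.

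One genuine technical caveat you are already half-aware of: the uniformity $r = \binom{k(n)}{d}$ of your hypergraph $\cH$ tends to infinity with $n$, whereas the Pippenger--Spencer theorem is usually stated for \emph{fixed} uniformity. So the sentence ``Pippenger--Spencer then applies'' is not quite a clean black-box citation; one needs a version that is uniform in $r$, or equivalently a direct nibble analysis in which the growing $r$ is tracked explicitly. You flag this correctly in your final paragraph, and your observation that a truncated nibble delivering only a constant fraction of the optimum already suffices for $(\ast)$ is accurate and would indeed sidestep the most delicate bookkeeping.
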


\noindent The family $\cF_n$ can be understood as a \emph{packing} of $d$-set-disjoint $k(n)$-sets. The above theorem asserts that this can be done in such a way that at least half of the $d$-sets are covered.

\begin{example}($d$-hypertribes) \label{hyperfn}
Fix $d \geq 2$. Let $k(n) = d \log_2 \frac{n}{\log_2 n}$, which is an integer when $n=2^{2^a}$ and $a \in \ZZ_+$. For sufficiently large $n$ we clearly have $k(n) \leq \frac{1}{2}\sqrt{n}$ and $k(n) \stackrel{n \to \infty}{\longrightarrow}~\infty$. Choose $n > N$ so that $k(n)$ is integer. Then, by Theorem~\ref{kuzju}, we get a family $\mathcal{F}_n$ of $k(n)$-element subsets of $[n]$ satisfying both ($\ast$) and ($\ast\ast$). 

Let us treat $+1$ and $-1$ as the Boolean \emph{true} and \emph{false}, respectively. We construct the $d$-hypertribe function $H_n^d \boolf \{-1,1\}$ as follows
\begin{equation*}
H_n^d(x) := \bigvee_{A \in \mathcal{F}_n} \bigwedge_{i \in A} x_i = \textrm{OR}\left(\textrm{AND}(x_1^1, \ldots, x_{k(n)}^1), \ldots, \textrm{AND}(x_1^t, \ldots, x_{k(n)}^t)\right),
\end{equation*}
where $t := |\cF_n|$ and $x_i^j \in A_j \in \mathcal{F}_n$. In other words, $H_n^d(x)$ is the outcome of (some special kind of) voting of $t$ tribes, each consisting of $k(n)$ members. If there is any tribe that unanimously votes for \emph{true}, then the outcome of $H_n^d$ is \emph{true}. Otherwise, the outcome of $H_n^d$ is \emph{false}. Tribes are not necessarily disjoint and each voter has to send the same vote in all the tribes to which they belong.

From now on we simply write $k$ and $H_n$ instead of $k(n)$ and $H_n^d$.
\end{example}

\begin{prop}\label{properties} The family $\cF_n$ constructed in \cref{hyperfn} satisfies the following properties.

\begin{enumerate}[label=(\textbf{P\arabic*})]

\item\label{p5} For every $\bi \in [n]_{\leq d}$,
\begin{equation*}
\big|\{A \in \mathcal{F}_n \colon \bi \subset A\}\big| \leq \left(\frac{n}{\log_2 n}\right)^{d-|\bi|};
\end{equation*}

\item\label{p3} There exists a constant $c_1 > 0$ depending only on $d$ such that
\[c_1 \leq t \cdot 2^{-k} \leq 1;\]

\item\label{p6} Every two distinct tribes $A$, $B \in \cF_n$ have $|A \cap B| \leq d-1$.
\end{enumerate}
\end{prop}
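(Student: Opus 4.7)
The plan is to derive \ref{p6} directly from ($\ast\ast$), then prove \ref{p5} by a double count, and finally deduce both sides of \ref{p3} from \ref{p5} and ($\ast$).

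Property \ref{p6} is immediate: if distinct $A,B\in\cF_n$ satisfied $|A\cap B|\geq d$, any $\bj\in[n]_d$ contained in $A\cap B$ would lie in two members of $\cF_n$, contradicting ($\ast\ast$). For \ref{p5}, I would double count the pairs $(A,\bj)$ with $A\in\cF_n$, $|\bj|=d$, and $\bi\subset\bj\subset A$. Summing over $A$ first gives $|\{A\in\cF_n\colon\bi\subset A\}|\cdot\binom{k-|\bi|}{d-|\bi|}$, while summing over $\bj$ first, together with ($\ast\ast$), bounds the total by $\binom{n-|\bi|}{d-|\bi|}$. Rearranging and crudely bounding the quotient of these binomials yields $|\{A\in\cF_n\colon\bi\subset A\}|\leq\bigl(n/(k-d+1)\bigr)^{d-|\bi|}$. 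Since $k=d(\log_2 n-\log_2\log_2 n)$ and $d\geq 2$, for $n$ sufficiently large we have $k-d+1\geq\log_2 n$, and \ref{p5} follows.

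For \ref{p3}, the upper bound $t\cdot 2^{-k}\leq 1$ is exactly \ref{p5} applied with $\bi=\emptyset$, together with the identity $2^k=(n/\log_2 n)^d$. For the lower bound, property ($\ast$) guarantees that at least $\binom{n}{d}/2$ of the $d$-subsets of $[n]$ are covered; each tribe covers exactly $\binom{k}{d}$ of them, and by ($\ast\ast$) these contributions are disjoint across tribes, yielding $t\binom{k}{d}\geq\binom{n}{d}/2$. Combining $\binom{n}{d}\geq(n/2)^d/d!$, $\binom{k}{d}\leq k^d/d!$, and $k\leq d\log_2 n$, one obtains $t\cdot 2^{-k}\geq c_1$ with a constant $c_1=c_1(d)>0$ of order $(2^{d+1}d^d)^{-1}$.

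The whole argument is a careful unpacking of the packing parameters guaranteed by \cref{kuzju}; no hypercontractivity or analytic input is needed. The only delicate step, which I expect to be the main technical obstacle, is matching asymptotics of the form $k-d+1\geq\log_2 n$ used in \ref{p5} --- but this is consistent with the ``sufficiently large $n$'' qualifier already built into \cref{hyperfn}.
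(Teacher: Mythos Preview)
Your proof is correct and follows essentially the same route as the paper: \ref{p6} from $(\ast\ast)$, \ref{p5} by the same double count of pairs $(A,\bj)$ with $\bi\subset\bj\subset A$, and \ref{p3} from \ref{p5} (upper bound) and $(\ast)$ (lower bound). Your binomial estimate $\binom{n-s}{d-s}/\binom{k-s}{d-s}\leq (n/(k-d+1))^{d-s}$ is in fact slightly more careful than the paper's, which writes $n^{d-s}/k^{d-s}$ directly; your subsequent check that $k-d+1\geq\log_2 n$ for large $n$ is exactly the kind of asymptotic matching the ``sufficiently large $n$'' proviso is there to absorb.
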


\begin{proof} \phantom{ }
\begin{enumerate}[label=(\textbf{P\arabic*})]
\item Fix $\bi \in [n]_{\leq d}$ and let $s = |\bi|$. There are $n-s \choose d-s$ $d$-sets in $[n]_d$ containing $\bi$. Every tribe $A \in \cF_n$ such that $\bi \subset A$ covers exactly $k-s \choose d-s$ $d$-sets containing $\bi$. Therefore, by $(\ast\ast)$,
\begin{equation*}
\big|\{A \in \mathcal{F}_n \colon \bi \subset A\}\big| \leq \frac{{n-s \choose d-s}}{{k-s \choose d-s}} \leq \frac{n^{d-s}}{k^{d-s}} \leq \left(\frac{n}{\log_2 n}\right)^{d-s}.
\end{equation*}

\item The upper bound follows from \ref{p5} applied to $\bi = \emptyset$. For the lower bound note that every tribe covers $k \choose d$ $d$-sets. Hence, by $(\ast)$,
\begin{equation*}
t \geq \frac{\frac12 {n \choose d}}{ {k \choose d}} \geq \frac{n^d}{2 \cdot k^d} \geq \frac{1}{2d^d} \left(\frac{n}{\log_2 n}\right)^d = \frac{1}{2d^d} \cdot 2^k,
\end{equation*}
and the statement follows with $c_1 = \tfrac1{2d^d}$.
\item This follows from $(\ast\ast)$. \qedhere
\end{enumerate}
\end{proof}

We now turn to proving specific bounds on $H_n$, which will give us the desired sharpness. For that let us introduce some convenient notation for speaking about the inputs of $H_n$.

\begin{defn} For $A \subset [n]$ we define the event $\cT(A) \subset \{-1,1\}^n$ by
\begin{equation*}
\cT(A) := \{x \in \{-1,1\}^n\colon \ \text{for every} \ i \in A \ \text{there is} \ x_i = 1\}.
\end{equation*}
\end{defn}

\noindent Clearly, $H_n(x) = 1$ if and only if $x \in \bigcup_{A\in \cF_n} \cT(A)$.

\begin{prop} \label{prop111}
There is a constant $c_2$ depending only on $d$ such that for every non-empty $\bi \subset [n]$ of size at most $d$,
\[ \jinfl_\bi(H_n) \leq c_2 \left(\frac{\log_2 n}{n}\right)^{|\bi|}. \]
\end{prop}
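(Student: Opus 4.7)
The plan is to bound $\infl_\bi(H_n) \leq \PP(\partial_\bi H_n \neq 0)$ via \cref{infl_lower_bd}(ii) and then estimate the latter using the packing properties \ref{p5} and \ref{p6}. Since $H_n = \bigvee_{A\in \cF_n} \bigwedge_{i\in A} x_i$ is monotone non-decreasing in each coordinate, flipping a bit $i$ can alter $H_n$ only from $-1$ to $+1$, and only by turning some tribe $A \ni i$ into an all-$+1$ tribe. Combining this with \cref{derinotzero}, if $\partial_\bi H_n(x) \neq 0$ then for every $i \in \bi$ there exists a tribe $A_i \in \cF_n$ with $i \in A_i$ such that $x_j = +1$ for all $j \in A_i \setminus \bi$ (the coordinates $A_i \cap (\bi \setminus i)$ are free because the witnessing $\varepsilon$ in \cref{derinotzero} can be chosen).

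Writing $B := \bigcup_{i \in \bi} A_i \supseteq \bi$, a union bound over the choice of witnesses gives
\begin{equation*}
\PP(\partial_\bi H_n \neq 0) \leq \sum_{(A_i)_{i \in \bi}} \PP\bigl(x_j = 1 \text{ for all } j \in B \setminus \bi\bigr) = \sum_{(A_i)} 2^{r - |B|}.
\end{equation*}
I would stratify this sum by $s \in [r]$, the number of distinct tribes appearing in $(A_i)$. For each fixed $s$, the tuple $(A_i)$ is parametrized by an ordered partition $\bi = \bi_1 \sqcup \cdots \sqcup \bi_s$ into nonempty parts (at most $s^r$ of them) together with distinct tribes $A^{(j)} \supseteq \bi_j$. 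By \ref{p5}, the number of choices for each $A^{(j)}$ is at most $(n/\log_2 n)^{d - |\bi_j|}$, so in total at most $(n/\log_2 n)^{sd - r}$. By \ref{p6}, any two distinct tribes share at most $d-1$ elements, so incrementally adding tribes yields $|B| \geq sk - (d-1)\binom{s}{2}$. Substituting $2^{-k} = (\log_2 n / n)^d$, the contribution from stratum $s$ reduces to $2^r s^r \, 2^{(d-1)\binom{s}{2}}(\log_2 n/n)^r$, and summing over $s \in [r]$ yields the claimed bound with $c_2 = c_2(d)$.

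The main obstacle I anticipate is the interplay between the two packing bounds: admitting more distinct tribes inflates the witness count by factors of $n/\log_2 n$ (through \ref{p5}), while simultaneously reducing the joint "all-$+1$" probability by $2^{-k} = (\log_2 n/n)^d$ per additional tribe (through \ref{p6}). These two effects must cancel exactly to leave the desired $(\log_2 n/n)^r$ factor, with the remaining combinatorial overhead — the number of partitions and the intersection slack $2^{(d-1)\binom{s}{2}}$ — absorbed into a $d$-dependent constant. In particular, the naive union bound that treats each coordinate $i$ independently (using \ref{p5} alone with at most $(n/\log_2 n)^{d-1}$ witnesses per $i$) ignores this cancellation and already becomes trivial when $r \geq 3$; it is essential to track the union $B$ jointly and exploit \ref{p6}.
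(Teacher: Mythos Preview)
Your proposal is correct and follows essentially the same approach as the paper: bound $\infl_\bi(H_n)$ by $\PP(\poch_\bi H_n\neq 0)$, use \cref{derinotzero} together with monotonicity to produce, for each $i\in\bi$, a witness tribe $A_i\ni i$ with $x\in\cT(A_i\setminus\bi)$, then stratify by the partition of $\bi$ induced by the distinct witness tribes and balance the count from \ref{p5} against the probability bound from \ref{p6}. The only differences are bookkeeping: the paper indexes by unordered partitions $S$ of $\bi$ and uses the cruder estimate $|\bigcup_{\bj\in S} A_\bj\setminus\bi|\geq |S|k-d^3$, whereas you index by the number $s$ of distinct tribes and use the sharper incremental bound $|B|\geq sk-(d-1)\binom{s}{2}$; both yield a $d$-dependent constant times $(\log_2 n/n)^r$.
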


\begin{proof}
Consider $x\in \{-1,1\}^n$ such that every $i \in \bi$ is $\bi$-pivotal for $H_n$ on $x$. By the definition of the $\bi$-pivotality, for every $i$ there is $y \in \{-1,1\}^n$ such that $y_j = x_j$ for every $j \in [n] \setminus \bi$ and
\[ H_n(y^{i\to+1}) \neq H_n(y^{i\to-1}).\]
As $H_n$ is increasing, the above may be rewritten as
\[H_n(y^{i\to+1}) = 1 \quad \text{and} \quad 
H_n(y^{i\to-1}) = -1. \]
It follows that there is a tribe $A \in \cF_n$ (which may not be unique) such that $i \in A$ and $x \in \cT(A \setminus \bi)$.


Let $B_\bi$ denote the set of all partitions of $\bi$. Taking into account that the above property holds for all $i \in \bi$, we conclude that there is a partition $S \in B_\bi$ and distinct tribes $A_\bj$ for $\bj \in B_\bi$, so that $x \in \cT(A_\bj \setminus \bi)$. Hence,
\begin{align}
\jinfl_\bi(H_n) &\leq
\PP\Bigg( \bigcup_{S \in B_\bi} \bigcup_{\substack{\{A_\bj\}_{\bj \in S} \subset \cF_n\colon \\ \forall \bj \in S \colon \bj \subset A_\bj}} \bigcap_{\bj \in S} \cT(A_\bj \setminus \bi) \Bigg) \nonumber \\ &\leq 
\sum_{S \in B_\bi} \sum_{\substack{\{A_\bj\}_{\bj \in S} \subset \cF_n\colon \\ \forall \bj \in S \colon \bj \subset A_\bj}} \PP\bigg(\bigcap_{\bj \in S} \cT(A_\bj \setminus \bi) \bigg) \label{long_sum}.
\end{align}

As the number of partitions of $\bi$ is bounded by some function of $d$, by (\ref{long_sum}) it is enough to show that for every partition $S \in B_\bi$ we have
\begin{equation} \label{enough_to_show}
\sum_{\substack{\{A_\bj\}_{\bj \in S} \subset \cF_n\colon \\ \forall \bj \in S \colon \bj \subset A_\bj}} \PP\left(\bigcap_{\bj \in S} \cT(A_\bj \setminus \bi) \right) \leq \widetilde{c_2} \cdot \bigg(\frac{\log_2 n}{n}\bigg)^r.
\end{equation}
Clearly,
$\bigcap_{\bj \in S} \cT(A_\bj \setminus \bi) = \cT\big(\bigcup_{\bj \in S} A_\bj \setminus \bi\big)$. Further, as $|S| \leq |\bi| \leq d$ and $\ref{p6}$, by the inclusion-exclusion principle, 
\[\bigg| \bigcup_{\bj \in S} A_\bj \setminus \bi \bigg| \geq |S|k - {|S| \choose 2}(d-1) - d \geq |S|k - d^3,\] so that
\begin{equation} \label{enough_proof1}
\PP\bigg( \bigcap_{\bj \in S} \cT(A_\bj \setminus \bi) \bigg) 
\leq 2^{-|S|k + d^3} = 2^{d^3} \cdot \left(\frac{\log_2 n}{n}\right)^{d|S|}.
\end{equation}
We now prove that the number of terms in the sum in (\ref{enough_to_show}) is sufficiently small. Indeed, because of $\ref{p5}$, for every $\bj \in S$, 
\begin{equation*}
\big| \{A_\bj \in \cF_n\colon \bj \subset A_\bj \} \big| \leq \left( \frac{n}{\log_2 n} \right)^{d-|\bj|},
\end{equation*}
so that
\begin{equation} \label{enough_proof2}
\big| \{ \{A_\bj\}_{\bj \in S} \subset \cF_n\colon \forall \bj \in S \text{ there is } \bj \subset A_\bj \} \big| \leq \prod_{\bj \in S} \left(\frac{n}{\log_2 n}\right)^{d-|\bj|} = \left(\frac{n}{\log_2 n}\right)^{d|S|-r}.
\end{equation}
Finally, by (\ref{enough_proof1}) and (\ref{enough_proof2}),
\begin{align*}
\sum_{\substack{\{A_\bj\}_{\bj \in S} \subset \cF_n\colon \\ \forall \bj \in S \colon \bj \subset A_\bj}} \PP\bigg(\bigcap_{\bj \in S} \cT(A_\bj \setminus \bi) \bigg) \leq
\left(\frac{n}{\log_2 n}\right)^{d|S|-r} \cdot 2^{d^3} \left(\frac{\log_2 n}{n}\right)^{d|S|} = 2^{d^3} \left(\frac{\log_2 n}{n}\right)^r,
\end{align*}
which yields the statement with $c_2 = |B_\bi| \cdot 2^{d^3} \leq d^d2^{d^3}$.
\end{proof}

In the next two propositions, we prove that $H_n$ attains both values $\pm 1$ with at least positive constant probability. For the proof of \cref{prop444} we use the Harris inequality for decreasing sets, which we recall below. In \cref{prop333} the argument proceeds via a standard second moment method. Afterwards we finally prove \cref{hyperthm}.

\begin{defn}
We say that an event $S \subset \{-1,1\}^n$ is \emph{decreasing} if for every $x, y \in \{-1,1\}^n$ the conditions $x \in S$ and $x \geq y$ coordinate-wise imply that $y \in S$.
\end{defn}

\begin{theorem}[\cite{H60}]
If $S_1$, $S_2$, \ldots, $S_m \subset \{-1,1\}^n$ are all decreasing events, then
\begin{equation*}
\PP\bigg(\bigcap_{i \in [m]} S_i\bigg) \geq \prod_{i \in [m]} \PP(S_i).
\end{equation*}
\end{theorem}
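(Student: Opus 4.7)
The plan is a two-layer induction. First I would reduce the general statement to the case $m=2$ by induction on $m$: since the intersection $T := S_1 \cap \cdots \cap S_{m-1}$ of decreasing events is itself decreasing, the pairwise case applied to $T$ and $S_m$, combined with the inductive hypothesis at level $m-1$ applied to the family $S_1, \ldots, S_{m-1}$, yields $\PP(S_1 \cap \cdots \cap S_m) \geq \PP(T)\PP(S_m) \geq \prod_{i=1}^{m} \PP(S_i)$.

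For the pairwise case I would induct on $n$. The base $n=1$ is an immediate finite check, since the only decreasing subsets of $\{-1,1\}$ are $\emptyset$, $\{-1\}$, and $\{-1,1\}$, and the inequality holds in each of the finitely many combinations. For the inductive step, condition on the last coordinate. For $i \in \{1,2\}$ put
\[
S_i^{\pm} := \{x' \in \{-1,1\}^{n-1} : (x', \pm 1) \in S_i\}.
\]
Because $(x', -1) \leq (x', +1)$ coordinatewise and each $S_i$ is decreasing, we have $S_i^+ \subseteq S_i^-$, and a short check shows each $S_i^\pm$ is itself a decreasing subset of $\{-1,1\}^{n-1}$. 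Writing $a_i := \PP_{n-1}(S_i^+)$ and $b_i := \PP_{n-1}(S_i^-)$, so that $0 \leq a_i \leq b_i \leq 1$, the law of total probability gives
\[
\PP(S_1 \cap S_2) = \tfrac{1}{2}\PP_{n-1}(S_1^+ \cap S_2^+) + \tfrac{1}{2}\PP_{n-1}(S_1^- \cap S_2^-) \geq \tfrac{1}{2}(a_1 a_2 + b_1 b_2),
\]
where the inequality applies the inductive hypothesis to the pairs $(S_1^+, S_2^+)$ and $(S_1^-, S_2^-)$, while $\PP(S_1)\PP(S_2) = \tfrac{1}{4}(a_1 + b_1)(a_2 + b_2)$.

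The sole remaining step, and the only genuinely algebraic point of the argument, is the deterministic inequality $2(a_1 a_2 + b_1 b_2) \geq (a_1 + b_1)(a_2 + b_2)$, which rearranges to $(b_1 - a_1)(b_2 - a_2) \geq 0$ and therefore follows from $a_i \leq b_i$. I do not anticipate any serious obstacle: both inductions are structural and the hardest point is verifying that conditioning preserves the decreasing property and then observing that the one-line rearrangement expresses the ``same-direction'' monotonicity of the conditional probabilities $a_i, b_i$, which is the real content of the inequality.
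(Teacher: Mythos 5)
Your proof is correct and complete: the reduction to $m=2$ via the fact that an intersection of decreasing events is decreasing, the coordinate-wise induction with the slices $S_i^{\pm}$, and the final rearrangement $2(a_1a_2+b_1b_2)-(a_1+b_1)(a_2+b_2)=(b_1-a_1)(b_2-a_2)\geq 0$ are all sound, and this is the standard textbook proof of Harris's inequality for the uniform product measure on the discrete cube. The paper itself gives no proof of this statement --- it is quoted directly from Harris \cite{H60} --- so there is nothing to compare against beyond noting that your argument is the classical one.
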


\begin{prop} \label{prop444}
There is a universal constant $c_3>0$ such that $\PP(H_n=-1) \geq c_3$.
\end{prop}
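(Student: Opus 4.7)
The plan is to express $\{H_n = -1\}$ as an intersection of simple decreasing events, apply the Harris correlation inequality, and then use the uniform bound on the number of tribes from \ref{p3} to conclude.

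First I would note that by the definition of $H_n$, we have the identity $\{H_n = -1\} = \bigcap_{A \in \cF_n} \cT(A)^c$, since $H_n(x) = -1$ precisely when no tribe votes unanimously $+1$. The key observation is that each event $\cT(A)^c = \{x : \exists i \in A,\ x_i = -1\}$ is decreasing in the sense of the definition preceding the Harris inequality: if $x \in \cT(A)^c$ and $y \leq x$ coordinatewise, then picking $i \in A$ with $x_i = -1$ forces $y_i = -1$, hence $y \in \cT(A)^c$.

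With this in hand, Harris gives
\begin{equation*}
\PP(H_n = -1) = \PP\!\left(\bigcap_{A \in \cF_n} \cT(A)^c\right) \geq \prod_{A \in \cF_n} \PP\bigl(\cT(A)^c\bigr) = (1 - 2^{-k})^t,
\end{equation*}
since $|A| = k$ and the coordinates are i.i.d.\ uniform on $\{-1,1\}$. To turn this into a constant lower bound, I would use the elementary estimate $\ln(1-u) \geq -2u$ for $u \in [0, 1/2]$ (which certainly applies for $u = 2^{-k}$ when $n$ is large enough) to obtain $(1 - 2^{-k})^t \geq \exp(-2t \cdot 2^{-k})$. Now property \ref{p3} supplies exactly the bound $t \cdot 2^{-k} \leq 1$, so $\PP(H_n = -1) \geq e^{-2}$, and $c_3 := e^{-2}$ works (for small $n$ one can shrink the constant further or absorb them into a case analysis).

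There is no real obstacle here: the main thing to check carefully is the direction of monotonicity (which is why $\cT(A)^c$, not $\cT(A)$, is the right event to feed into Harris), and to make sure the Harris hypothesis really does apply to all the events $\cT(A)^c$ simultaneously, which it does because decreasingness is preserved. The decisive input from the construction is the upper bound $t \leq 2^k$ in \ref{p3}, which is exactly the condition preventing the product $(1-2^{-k})^t$ from collapsing to $0$.
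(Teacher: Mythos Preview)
Your proof is correct and follows essentially the same route as the paper: write $\{H_n=-1\}$ as the intersection of the decreasing events $\cT(A)^c$, apply Harris to get $(1-2^{-k})^t$, and then combine the elementary bound $1-u\geq e^{-2u}$ with \ref{p3} to conclude $\PP(H_n=-1)\geq e^{-2}$. The parenthetical about small $n$ is unnecessary, since the hypertribe construction is only defined for sufficiently large $n$ anyway.
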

\begin{proof}
Let $\mathcal{F}_n = \{A_i\colon i \in [t]\}$. Denote by $X^c$ the complement of the event $X \subset \{-1,1\}^n$. 

We have
$\{H_n=-1\} = \bigcap_{i \in [t]} \cT(A_i)^c$.
Further, for every $i\in[t]$, the event $\cT(A_i)^c$ is decreasing. Hence, by the Harris inequality for the events $\cT(A_1)^c$, \dots, $\cT(A_t)^c$,
\begin{equation*}
\PP(H_n=-1) = \PP\bigg(\bigcap_{i \in [t]} \cT(A_i)^c\bigg) \geq \prod_{i \in [t]} \PP\big(\cT(A_i)^c\big) = (1-2^{-k})^t \geq \e^{-t2^{-k+1}} \stackrel{\ref{p3}}{\geq} \e^{-2},
\end{equation*}
where we used the bound $1-x \geq \e^{-2x}$ which is valid for $x \in [0,\frac14]$.
\end{proof}

\begin{prop} \label{prop333}
There is a constant $c_4\! >\! 0$ depending only on $d$ such that $\PP(H_n=+1)\geq c_4$.
\end{prop}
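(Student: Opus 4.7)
The plan is to bound $\PP(H_n=+1)$ from below via the second moment method (Paley--Zygmund inequality), applied to the random variable counting how many tribes are entirely $+1$. Concretely, enumerate $\cF_n = \{A_1, \ldots, A_t\}$ and set
\[
X := \sum_{j=1}^t \IND_{\cT(A_j)},
\]
so that $\{H_n = +1\} = \{X \geq 1\}$. By the Cauchy--Schwarz trick $(\EX X)^2 = (\EX[X \IND_{\{X \geq 1\}}])^2 \leq \EX X^2 \cdot \PP(X \geq 1)$, it will be enough to show $\EX X^2 = O((\EX X)^2)$ with an implicit constant depending only on $d$.

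For the first moment, each $\PP(\cT(A_j)) = 2^{-k}$, so $\EX X = t \cdot 2^{-k}$. By \ref{p3}, this lies in $[c_1, 1]$, which is the desired two-sided control depending only on $d$.

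For the second moment, I would split the sum $\EX X^2 = \sum_{i,j} \PP(\cT(A_i) \cap \cT(A_j))$ into the diagonal and off-diagonal contributions. The diagonal part is simply $t \cdot 2^{-k} = \EX X \leq 1$. For $i \neq j$, property \ref{p6} gives $|A_i \cap A_j| \leq d-1$, whence $|A_i \cup A_j| \geq 2k - (d-1)$ and
\[
\PP(\cT(A_i) \cap \cT(A_j)) = 2^{-|A_i \cup A_j|} \leq 2^{-2k+d-1}.
\]
Since $t \leq 2^k$ by \ref{p3}, the off-diagonal contribution is at most $t^2 \cdot 2^{-2k+d-1} \leq 2^{d-1}$. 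Hence $\EX X^2 \leq 1 + 2^{d-1}$, and combining with $\EX X \geq c_1$ yields
\[
\PP(H_n = +1) \geq \frac{(\EX X)^2}{\EX X^2} \geq \frac{c_1^2}{1 + 2^{d-1}} =: c_4,
\]
a constant depending only on $d$, as required.

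No step should pose a serious obstacle: the main input is simply recognising that \ref{p3} gives both a positive lower bound on $\EX X$ and the upper bound $t \leq 2^k$ needed to control the second moment, while \ref{p6} (the packing property) supplies the essential pairwise near-independence of the events $\cT(A_j)$.
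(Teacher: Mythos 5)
Your proposal is correct and follows essentially the same route as the paper: a second moment (Paley--Zygmund/Cauchy--Schwarz) argument for the count of unanimous tribes, using \ref{p3} for the first moment and \ref{p6} for the pairwise bound $\PP(\cT(A_i)\cap\cT(A_j))\leq 2^{-2k+d-1}$. The only difference is cosmetic: you bound $\EX X^2$ by the absolute constant $1+2^{d-1}$ via $t2^{-k}\leq 1$, whereas the paper normalises by $t2^{-k}$ to get $c_1/(1+2^d c_1)$.
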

\begin{proof}
Let $\mathcal{F}_n = \{A_i\colon i \in [t]\}$. Further, let $N = \sum_{i \in [t]} N_i$, where $N_i = \IND_{\cT(A_i)}$ are random variables.  

Observe that $\{H_n=+1 \} = \{N > 0 \}$. By the Paley-Zygmund inequality,
\begin{align}\label{paleyzygmund}
\PP(H_n=+1) &= \PP(N > 0) \geq \frac{(\EX N)^2}{\EX [N^2]} =
\frac{(t \EX N_1)^2}{\ds \sum_{1 \leq i \leq t} \EX [N_i^2] + 2 \sum_{ 1 \leq i < j \leq t} \EX [N_i N_j]}.
\end{align}
Clearly, $\EX N_1 = \EX [N_i^2] = \PP(\cT(A_i)) = 2^{-k}$. For the expectations of the products, note that \[\EX [N_iN_j] = \PP(\cT(A_i) \cap \cT(A_j)) = \PP(\cT(A_i \cup A_j)).\] By \ref{p6}, $|A_i \cup A_j| > 2k-d$ for distinct $i$, $j \in [t]$, so $\EX [N_iN_j] \leq 2^{-2k+d}$. Hence, (\ref{paleyzygmund}) yields
\[ \PP(H_n = +1) \geq \frac{t^2 2^{-2k}}{t2^{-k} + t(t-1) 2^{-2k+d} } \geq \frac{t2^{-k}}{1 + 2^d\cdot t2^{-k}}  \stackrel{\ref{p3}}{\geq} \frac{c_1}{1+2^dc_1}, \]
and the statement follows with $c_4 = \frac{c_1}{1+2^dc_1}$.
\end{proof}

\begin{theorem}\label{hyperthm}
For every $d \geq 2$ there is a constant $c_5$ depending only on $d$ such that
\[ \jinfl_\bi(H_n) \leq c_5 \bW^{\geq d}(H_n) \left(\frac{\log_2n}{n}\right)^d \]
for all $\bi \in [n]^d$. In particular, for every $d\geq 2$ \cref{Main Theorem} is asymptotically sharp.
\end{theorem}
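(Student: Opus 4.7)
The plan is to combine the bounds already established in this section. \cref{prop111} with $r=d$ gives $\maxinf_d(H_n) \leq c_2(\log_2 n/n)^d$, so the theorem reduces to showing that $\bW^{\geq d}(H_n)$ is bounded below by a positive constant depending only on $d$, at least for all sufficiently large $n$ in the sequence on which $H_n$ is defined; the finitely many smaller exceptional values of $n$ can be absorbed into $c_5$ afterwards (and in the degenerate case $\bW^{\geq d}(H_n)=0$ we also have $\maxinf_d(H_n)=0$, so the inequality holds trivially).

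For the variance lower bound, \cref{prop333,prop444} yield
\[
\VAR(H_n) \;=\; 1-\HH{n}(\emp)^2 \;=\; 4\,\PP(H_n=+1)\,\PP(H_n=-1) \;\geq\; 4c_3c_4,
\]
equivalently $\sum_{|\bi|\geq 1}\HH{n}(\bi)^2 \geq 4c_3c_4$. I would then control the intermediate Fourier weights $\bW^{=r}(H_n)$ for $1\leq r\leq d-1$ by combining \cref{infl_lower_bd}~(iii) with \cref{prop111}: for every $\bi\in[n]_r$,
\[
|\HH{n}(\bi)| \;\leq\; 2^{r-1}\,\infl_\bi(H_n) \;\leq\; 2^{r-1}c_2\left(\frac{\log_2 n}{n}\right)^r,
\]
so, crudely bounding $\binom{n}{r} \leq n^r$,
\[
\bW^{=r}(H_n) \;\leq\; \binom{n}{r}\cdot 4^{r-1}c_2^2\left(\frac{\log_2 n}{n}\right)^{2r} \;\leq\; 4^{r-1}c_2^2\cdot\frac{(\log_2 n)^{2r}}{n^r},
\]
which tends to $0$ as $n\to\infty$. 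Summing over the $d-1$ values $r=1,\ldots,d-1$ (a quantity depending only on $d$), the total low-level contribution is $o_n(1)$ with implicit dependence only on $d$.

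Combining the two steps, for $n$ sufficiently large,
\[
\bW^{\geq d}(H_n) \;=\; \VAR(H_n) - \sum_{r=1}^{d-1}\bW^{=r}(H_n) \;\geq\; 4c_3c_4 - o_n(1) \;\geq\; 2c_3c_4,
\]
and then \cref{prop111} gives
\[
\maxinf_d(H_n) \;\leq\; c_2\left(\frac{\log_2 n}{n}\right)^d \;\leq\; \frac{c_2}{2c_3c_4}\,\bW^{\geq d}(H_n)\left(\frac{\log_2 n}{n}\right)^d,
\]
as required, with $c_5$ depending only on $d$. The asymptotic sharpness of \cref{kkl111} is then immediate: its lower bound $\tfrac{1}{10}\bW^{\geq d}(f)(\ln n/n)^d$ is matched in order by $H_n$ up to a $d$-dependent multiplicative constant (the switch between $\ln n$ and $\log_2 n$ costs only a universal factor). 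I do not foresee a serious obstacle; the only bookkeeping step is handling the finitely many exceptional $n$ where the $o_n(1)$ estimate is not yet effective, which is absorbed by enlarging $c_5$.
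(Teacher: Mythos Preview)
Your proposal is correct and follows essentially the same argument as the paper: reduce to a uniform lower bound on $\bW^{\geq d}(H_n)$, obtain the variance bound $1-\HH{n}(\emp)^2\geq 4c_3c_4$ from \cref{prop333,prop444}, and kill the low-level weights $\bW^{=r}(H_n)$ for $1\leq r\leq d-1$ via \cref{infl_lower_bd}(iii) combined with \cref{prop111}. The only cosmetic differences are that you track the constant $4^{r-1}$ where the paper uses the cruder $2^{2d}$, and you make explicit the handling of the finitely many small $n$, which the paper leaves implicit.
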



\begin{proof}
By \cref{prop111} we have $\jinfl_\bi(H_n) \leq c_2\big(\frac{\log_2 n}{n}\big)^d$ for all $\bi \in [n]^d$. Hence, it is enough to show that there is a constant $c_6 > 0$ such that for sufficiently large $n$ we have $\bW^{\geq d}(H_n) > c_6$. 

Since $H_n$ is Boolean,
\begin{equation} \label{postac}
\bW^{\geq d} (H_n) = 1 - \HH{n}(\emp)^2 - \sum_{r = 1}^{d-1} \bW^{=r}(H_n).
\end{equation}
By \cref{prop444,prop333},
\begin{gather} \label{postac2}
1 - \HH{n}(\emp)^2 = 4 \PP(H_n = +1) \PP(H_n = -1) \geq 4c_3c_4.
\end{gather}
Further, for every $r \in [d-1]$, by \cref{infl_lower_bd},
\begin{align*}
\bW^{=r}(H_n) = \sum_{\bi \in [n]_r} \HH{n}(\bi)^2 \leq 2^{2d} \sum_{\bi \in [n]_r} (\infl_\bi(H_n))^2 \leq 2^{2d} n^r (\maxinf_r(H_n))^2 
\end{align*}
and by \cref{derinotzero,prop111} this may be further bounded above by
\begin{align*}
c_2^2 2^{2d}n^r \cdot \left(\frac{\log_2 n}{n}\right)^{2r} =
 c_2^2 2^{2d} \cdot \frac{(\log_2 n)^{2r}}{n^{r}},
\end{align*}
so $\sum_{r = 1}^{d-1} \bW^{=r}(H_n) < 2c_3c_4$ for sufficiently large $n$. Hence, by (\ref{postac}) and (\ref{postac2}), for sufficiently large $n$, we have $\bW^{\geq d}(H_n) > c_3c_4$, as desired.
\end{proof}

\section{The FKN-type theorem for the multi-bit influences}\label{sec:prefkn}

We now turn our attention to \cref{FKN-type theorem}. Roughly speaking, it says that a Boolean function which is close to a $d$-degree function in the sense of having small $(d+1)$-degree influences is necessarily close to a $d$-degree Boolean function. Such a statement might be seen as a variation of the FKN theorem~\cite{FKN02}, or its higher order generalization---the Kindler--Safra theorem \cite{KS02}---\cref{kindlersafra}. Indeed, the Kindler--Safra theorem ensures that a Boolean function which is close to a $d$-degree function in the sense of having small $(d+1)$-order variance $\bW^{\geq d+1}$ is necessarily close to a $d$-degree Boolean function. As it turns out, the assumption of having small $(d+1)$-degree influences is strictly stronger than having small $(d+1)$-order variance $\bW^{\geq d+1}$. However, this stronger assumption yields a better approximation than the one provided by the Kindler--Safra theorem.

Additionally, \cref{FKN-type theorem} is a direct generalization of Oleszkiewicz's \cref{oleszkiewicz}, which can be recovered by taking $d=1$. In his paper, \cite{Ol21}, Oleszkiewicz presents two separate proofs of \cref{oleszkiewicz}; one solely uses hypercontraction, the other the log-Sobolev inequality. Our proof of \cref{FKN-type theorem} combines the two approaches. It seems unlikely to us that just using one of them would be sufficient in this higher order case. 

In the following section we lay the foundation for the proof of \cref{FKN-type theorem}. First, in \cref{fknprop2} we prove an integral representation of $\infl_\bj(g) - \gg(\bj)^2$. In \cref{fknprop4,fknprop5} we prove some approximation results regarding influences and Fourier--Walsh coefficients. Further, in \cref{wsppocho} we prove that the Fourier--Walsh coefficients of a $d$-degree Boolean function lie in the $\ZZ/2^{d-1}$ lattice.

\subsection{Hypercontractive propositions}

In the following proposition we prove a higher order analogue of Lemma 3.3 from~\cite{Ol21}.

\begin{prop} \label{fknprop2}
Let $n > k \geq 0$ be integers. For every $g\boolf \RR$ and $\bj \in [n]_k$,
\begin{gather*}
\infl_\bj(g) - \gg(\bj)^2 = 2 \sum_{\substack{\ \bi \in [n]_{k+1}\!\colon\\ \bj \subseteq \bi}} \int_0^\infty \e^{-2t} \| P_t \poch_\bi g \|^2 \dt.
\end{gather*}
\end{prop}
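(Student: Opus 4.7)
The plan is to compute the right-hand side directly using the Fourier--Walsh expansion and the explicit form of $P_t \partial_\bi g$, then swap the order of summation (Fubini, justified by non-negativity) so the inner sum matches $\infl_\bj(g) - \hat g(\bj)^2 = \sum_{\bk \supsetneq \bj} \hat g(\bk)^2$.

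Concretely, for $\bi \in [n]_{k+1}$ with $\bj \subseteq \bi$, I would first note that $\partial_\bi g = \sum_{\bk \supseteq \bi} \hat g(\bk) \chi_{\bk \setminus \bi}$, so applying $P_t$ and Parseval gives
\[
\|P_t \partial_\bi g\|_2^2 \;=\; \sum_{\bk \supseteq \bi} e^{-2(|\bk|-k-1)t}\, \hat g(\bk)^2.
\]
Multiplying by $e^{-2t}$ and summing over admissible $\bi$ yields
\[
\sum_{\bi \in [n]_{k+1},\, \bj \subseteq \bi} e^{-2t}\|P_t \partial_\bi g\|_2^2 \;=\; \sum_{\bi \in [n]_{k+1},\, \bj \subseteq \bi}\;\sum_{\bk \supseteq \bi} e^{-2(|\bk|-k)t}\, \hat g(\bk)^2.
\]

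The key combinatorial step is to exchange the two sums. Every pair $(\bi,\bk)$ appearing on the right satisfies $\bj \subseteq \bi \subseteq \bk$ with $|\bi|=k+1$. For a fixed $\bk \supsetneq \bj$, the number of $\bi$'s sitting between $\bj$ and $\bk$ with one more element than $\bj$ is exactly $|\bk|-k$, since we must choose a single element of $\bk \setminus \bj$. Hence
\[
\sum_{\bi \in [n]_{k+1},\, \bj \subseteq \bi} e^{-2t}\|P_t \partial_\bi g\|_2^2 \;=\; \sum_{\bk \supsetneq \bj} (|\bk|-k)\, e^{-2(|\bk|-k)t}\, \hat g(\bk)^2.
\]

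Finally, integrating in $t$ and using $\int_0^\infty (|\bk|-k) e^{-2(|\bk|-k)t}\dt = \tfrac12$ for each $\bk \supsetneq \bj$, I obtain
\[
2\sum_{\bi \in [n]_{k+1},\, \bj \subseteq \bi} \int_0^\infty e^{-2t}\|P_t \partial_\bi g\|_2^2\,\dt \;=\; \sum_{\bk \supsetneq \bj} \hat g(\bk)^2 \;=\; \infl_\bj(g) - \hat g(\bj)^2,
\]
which is the claimed identity. There is no real obstacle here: the statement is a bookkeeping identity, and the only thing to be careful about is the double-counting factor $|\bk|-k$, which is precisely absorbed by the integral's value, leaving the clean factor of $2$ in front.
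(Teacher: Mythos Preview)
Your computation is correct. The paper's proof carries out exactly the same Fourier--Walsh bookkeeping, but packages it more concisely: it observes that $\infl_\bj(g)-\hat g(\bj)^2=\bW^{\geq 1}(\partial_\bj g)$ and then simply invokes \cref{kklprop1} with $d=1$ applied to the function $\partial_\bj g$, which is precisely your direct calculation unwound.
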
 

\begin{proof}
Let $\bj \in [n]_k$. Observe that $(\widehat{\poch_\bj g})(\emp) = \EX [\poch_\bj g] = \gg(\bj)$. Thus, by the definition of influence,
\begin{equation*}
\infl_\bj(g) - \gg(\bj)^2 = \|\poch_\bj g\|_2^2 - (\widehat{\poch_\bj g})(\emp)^2 = \bW^{\geq 1} (\poch_\bj g).
\end{equation*}
Further, \cref{kklprop1} applied to the function $f = \poch_\bj g$ and $d=1$ yields
\begin{equation*}
\bW^{\geq 1} (\poch_\bj g) = 2 \sum_{i \in [n]} \int_0^\infty \e^{-2t} \|P_t \poch_i (\poch_\bj g)\|_2^2 \dt = 2 \sum_{\substack{\ \bi \in [n]_{k+1}\!\colon\\ \bj \subseteq \bi}} \int_0^\infty \e^{-2t} \| P_t \poch_\bi g \|^2 \dt,
\end{equation*}
which finishes the proof.
\end{proof}

The next proposition relates how close is a Fourier--Walsh coefficient to the respective influence in terms of influences of one degree higher. We prove it via \cref{kklprop2} which uses hypercontraction.

\begin{prop} \label{fknprop4}
Let $n$, $d$, $k$ be integers such that $n \geq 2$, $d \geq 1$ and $n > k \geq 0$. Assume that $g \boolf \RR$ and $\beta \in [0,1]$ satisfy
\begin{gather*}
\maxinf_{k+1}(g) \leq \beta \left(\frac{\ln n}{n}\right)^{k+1} \quad \text{and for every} \ \bi \in [n]_{k+1} \ \text{there is} \ \imag(\poch_\bi g) \subseteq \ZZ/2^{d-1}.
\end{gather*}
Then for every $\bj \in [n]_k$,
\begin{gather*}
\infl_\bj(g) - \gg(\bj)^2 \leq 2^{2d} \beta \left(\frac{\ln n}{n}\right)^k.
\end{gather*}
\end{prop}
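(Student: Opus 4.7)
The plan is to use \cref{fknprop2} to rewrite the left-hand side as a sum of integrals, estimate each integral via hypercontractivity using \cref{kklprop2} with $l=1$, and then exploit the uniform upper bound on $\infl_\bi(g)$ together with a logarithmic comparison estimate. Concretely, \cref{fknprop2} gives
\[
\infl_\bj(g) - \gg(\bj)^2 \ =\ 2 \sum_{\substack{\bi \in [n]_{k+1}\colon\\ \bj \subseteq \bi}} \int_0^\infty \e^{-2t}\|P_t \poch_\bi g\|_2^2\, \dt,
\]
a sum of exactly $n-k \leq n$ terms. The hypotheses $\imag(\poch_\bi g) \subseteq \ZZ/2^{d-1}$ and $\infl_\bi(g) \leq \beta(\ln n/n)^{k+1} < 1$ (the strict inequality uses $\beta \leq 1$ and $\ln n/n < 1$ for $n \geq 2$) place us in the setting of \cref{kklprop2}, and applying that lemma with $l=1$ bounds each integrand above by $4^{d-1}\infl_\bi(g)/\ln(1/\infl_\bi(g))$.

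The next step is to replace $\infl_\bi(g)$ by the uniform upper bound $\beta(\ln n/n)^{k+1}$. This is legitimate because the function $x \mapsto x/\ln(1/x)$ is strictly increasing on $(0,1)$ (its derivative $(1-\ln x)/\ln^2 x$ is positive there), so each integral is at most $4^{d-1}\beta(\ln n/n)^{k+1}/\ln\!\left(1/(\beta(\ln n/n)^{k+1})\right)$. I expect the main quantitative obstacle to be lower bounding this denominator by a constant multiple of $\ln n$. Since $\beta \leq 1$,
\[
\ln\!\left(\frac{1}{\beta(\ln n/n)^{k+1}}\right)\ =\ -\ln\beta + (k+1)(\ln n - \ln\ln n)\ \geq\ \ln n - \ln\ln n,
\]
and the elementary inequality $\ln n \leq \sqrt{n}$ (valid for all $n \geq 1$) rearranges to $\ln\ln n \leq \tfrac12 \ln n$, so the denominator is at least $\tfrac12 \ln n$.

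Combining these ingredients and using $(n-k)/n \leq 1$,
\[
\infl_\bj(g) - \gg(\bj)^2 \ \leq \ 2(n-k)\cdot 4^{d-1}\cdot \frac{\beta(\ln n/n)^{k+1}}{\tfrac12 \ln n}\ =\ 4^{d}\beta \cdot \frac{n-k}{n}\left(\frac{\ln n}{n}\right)^{k}\ \leq\ 2^{2d}\beta\left(\frac{\ln n}{n}\right)^{k},
\]
which is the claimed bound. The degenerate case $\beta = 0$ or $\infl_\bi(g) = 0$ forces $\poch_\bi g \equiv 0$, so the corresponding integrals vanish and are absorbed by the conventions adopted in \cref{kklprop2}. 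Aside from this logarithmic comparison, the argument is just bookkeeping of the integral representation and the hypercontractive estimate.
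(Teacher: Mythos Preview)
Your proof is correct and follows essentially the same route as the paper: apply \cref{fknprop2} for the integral representation, bound each integral via \cref{kklprop2} with $l=1$, use monotonicity of $x\mapsto x/\ln(1/x)$ to pass to the uniform bound $\maxinf_{k+1}(g)\leq \beta(\ln n/n)^{k+1}$, and finish with the elementary estimate $\ln n\leq\sqrt{n}$ to lower bound the denominator by $\tfrac12\ln n$. The paper's computation is cosmetically different (it writes the last fraction as $\ln n/\ln\sqrt{n}=2$ rather than via $\ln\ln n\leq\tfrac12\ln n$), but the content is identical.
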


\begin{proof}
Let $\bi \in [n]_{k+1}$ be arbitrary. Then we have $\infl_\bi(g) \leq \maxinf_{k+1}(g)~\leq~1$. Therefore, applying Proposition \ref{kklprop2} to the function $f = g$, $l=1$ and $\bi$ we get
\begin{equation} \label{staryprop}
\int_0^\infty \e^{-2t} \| P_t \poch_\bi g \|^2 \dt \ \ \leq \ \  \frac{2^{2d-2} \infl_\bi(g)}{\ln(1/\infl_\bi(g))}.
\end{equation}

Fix $\bj \in [n]_k$. By Proposition \ref{fknprop2}, (\ref{staryprop}) and the fact that the function $t \mapsto t/\ln(1/t)$ is increasing on $[0,1]$, we obtain
\begin{align*}
\infl_\bj(g) - \gg(\bj)^2 &= 2 \sum_{\substack{ \ \bi \in [n]_{k+1} \!\colon \\ \bj \subseteq \bi}} \int_0^\infty \e^{-2t} \| P_t \poch_\bi g \|^2 \dt \leq 2^{2d-1} \sum_{\substack{ \ \bi \in [n]_{k+1} \!\colon \\ \bj \subseteq \bi}} \frac{\infl_\bi(g)}{\ln(1/\infl_\bi(g))} \\ 
&\leq 2^{2d-1}\frac{n\cdot \maxinf_{k+1}(g)}{\ln(1/\maxinf_{k+1}(g))} \leq 
2^{2d-1} \cdot \beta \left(\frac{\ln n}{n}\right)^k \cdot \frac{\ln n}{\ln\left(\frac{1}{\beta} \cdot \big(\frac{n}{\ln n}\big)^{k+1}\right)}.
\end{align*}

\noindent Observe that $n \geq 2$, so that $\sqrt{n} \geq \ln n$. Thus, since $\beta \leq 1$, we get
\begin{equation*}
2^{2d-1} \beta \left(\frac{\ln n}{n}\right)^k \cdot \frac{\ln n}{\ln\left(\frac{1}{\beta} \cdot \big(\frac{n}{\ln n}\big)^{k+1}\right)} \leq
2^{2d-1} \beta \left(\frac{\ln n}{n}\right)^k \cdot \frac{\ln n}{\ln \sqrt n} = 2^{2d} \beta \left(\frac{\ln n}{n}\right)^k. \qedhere
\end{equation*}
\end{proof}

\subsection{Log-Sobolev propositions}

In the following propositions we prove that a function that is close to a $d$-degree Boolean function, in some specific sense, has its Fourier--Walsh coefficients close to integer multiples of $1/2^{d-1}$. \cref{fknprop1} is a minor modification of Corollary~5.4 from \cite{Ol21}.

Let us introduce some convenient notation. 

\begin{defn}
Let $d \geq 1$ be an integer. We define $V_d = [-2^{2d}, 2^{2d}] \cap (\ZZ / 2^{d-1})$.
\end{defn}

\begin{prop} \label{fknprop1}
Let $d \geq 2$ be an integer. Assume $g \boolf V_d$ satisfies~$\totinf(g)<\tfrac{1}{2^{5d}}$. Then there exists $\eta \in \imag(g) \subseteq V_d$ such that
\begin{gather*}
\PP(g \neq \eta) \leq  \frac{2^{2d-1} \totinf(g)}{\ln\left(\frac{1}{2^{5d}\totinf(g)}\right)}.
\end{gather*}
\end{prop}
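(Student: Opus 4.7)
The plan is to apply the log-Sobolev inequality (Lemma \ref{logsob}) to well-chosen $\{0,1\}$-valued indicators associated with $g$, exploiting the fact that distinct elements of $V_d \subseteq \ZZ / 2^{d-1}$ are separated by at least $1/2^{d-1}$. Write $T := \totinf(g)$, $A := 2^{2d-1} T$, and $L := \ln(1/(2^{5d} T))$, so that the desired bound reads $A/L$. For each $v \in V_d$ set $f_v := \IND_{\{g = v\}}$ and $p_v := \EX f_v$.

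The first key step is the edge-counting inequality $\totinf(f_v) \leq 4^{d-1} T$ for every $v \in V_d$. Indeed, whenever $\poch_i f_v \neq 0$ on a direction-$i$ edge, $g$ jumps by at least $1/2^{d-1}$ across that edge, contributing at least $1/4^d$ to $(\poch_i g)^2$ versus exactly $1/4$ to $(\poch_i f_v)^2$; summing over edges gives the ratio $4^{d-1}$. Lemma \ref{logsob} applied to $f_v$ then yields $p_v \ln(1/p_v) \leq 2 \totinf(f_v) \leq A$.

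Next I would set up a dichotomy based on the shape of $\phi(p) := p \ln(1/p)$, which is unimodal on $[0,1]$, increasing on $[0, 1/e]$ and decreasing on $[1/e, 1]$, with maximum value $\phi(1/e) = 1/e$. Since $A < 2^{-3d-1} < 1/e$, the level set $\{\phi \leq A\} \cap (0,1)$ splits as $(0, p_-] \cup [p_+, 1)$. Two elementary estimates are needed: first $p_- \leq A$ (from $p_- \ln(1/p_-) = A$ and $p_- \leq 1/e$, so $\ln(1/p_-) \geq 1$); and second $p_+ \geq 1 - 2A$ (using $\ln(1/(1-y)) \geq y$ to get $\phi(1-2A) \geq 2A - 4A^2 \geq A$, valid since $A \leq 1/4$). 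Because $|V_d| \cdot A = (2^{3d}+1) \cdot 2^{2d-1} T < 1$, not all $p_v$ can lie in the small branch; because $1 - 2A > 1/2$, at most one $p_v$ can lie in the large branch. Consequently exactly one $\eta \in \imag(g)$ satisfies $p_\eta \geq 1 - 2A$, and we set $q := 1 - p_\eta = \PP(g \neq \eta) \leq 2A$.

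To upgrade from $q \leq 2A$ to the stated $q \leq A/L$, apply the edge argument once more to $h := \IND_{\{g \neq \eta\}} = 1 - f_\eta$, obtaining $\totinf(h) \leq 4^{d-1} T$; Lemma \ref{logsob} then gives $q \ln(1/q) \leq A$. Combined with the bound $q \leq 2A$ this yields
\[
\ln(1/q) \geq \ln(1/(2A)) = L + 3d \ln 2 \geq L,
\]
so $q \leq A/\ln(1/q) \leq A/L$, as desired. The main delicate step is the dichotomy/uniqueness argument: the hypothesis $T < 2^{-5d}$ is used there first to force $|V_d| A < 1$ (ruling out the all-small case) and $A < 1/4$ (ensuring $1 - 2A > 1/2$ so the large branch is unique), and then again in the last line to ensure $\ln(1/(2A)) \geq L$; once the large-branch value $\eta$ is isolated, the final sharpening is immediate.
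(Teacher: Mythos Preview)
Your proof is correct. Both your argument and the paper's hinge on the same two ingredients: the edge-counting inequality $\totinf(\IND_{\{g=v\}}) \leq 4^{d-1}\totinf(g)$ (from the $1/2^{d-1}$ separation in $V_d$) and the log-Sobolev bound of Lemma~\ref{logsob}, followed by a bootstrap from a crude bound on $\PP(g\neq\eta)$ to the sharp one. The only real difference is how $\eta$ is selected and the crude bound obtained. The paper uses a one-line averaging argument: since $\sum_{\theta\in V_d}\PP(g\neq\theta)=|V_d|-1$, some $\eta$ satisfies $\PP(g\neq\eta)\leq (|V_d|-1)/|V_d|$; feeding this into log-Sobolev via $\ln(|V_d|/(|V_d|-1))\geq 1/(2|V_d|)$ already gives $\PP(g\neq\eta)\leq 2^{5d}T$. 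You instead apply log-Sobolev to every $f_v$ and run a dichotomy on the unimodal function $p\mapsto p\ln(1/p)$ to isolate a unique large-probability value, obtaining the tighter intermediate bound $\PP(g\neq\eta)\leq 2^{2d}T$. Your route is a bit longer but yields uniqueness of $\eta$ as a byproduct; the paper's pigeonhole shortcut avoids the branch analysis entirely. Either intermediate bound suffices for the final sharpening step, which is identical in both arguments.
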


\begin{proof}
If $\totinf(g) = 0$, then $g$ is constant and in this case $\eta = g$ satisfies the statement.

Assume, henceforth, that $\totinf(g) > 0$. Note that 
\begin{equation*}
\sum_{\theta \in V_d} \PP(g \neq \theta) = \sum_{\theta \in V_d} (1 - \PP(g = \theta)) = |V_d| - 1 = 2^{3d}.
\end{equation*}
Therefore, we may pick $\eta \in V_d$ such that
\begin{gather}
\PP(g \neq \eta) \leq \frac{|V_d|-1}{|V_d|}. \label{ubound}
\end{gather}
Let $h = \IND_{ \{g \neq \eta\}}$. We claim that
\begin{gather}
\totinf(h) \leq 2^{2d-2} \cdot \totinf(g). \label{claim1}
\end{gather}
Indeed, note that
\begin{align*}
|\poch_i h (x)| = \frac{1}{2}\big|h(\xdopl) &- h(\xdomi)\big| = \frac{1}{2} \big| \IND_{ V_d \setminus \{\eta\} }(g(\xdopl)) - \IND_{ V_d \setminus \{\eta\} }(g(\xdomi)) \big| \\
&\leq \frac{2^{d-1}}{2} \cdot \big|g(\xdopl) - g(\xdomi)\big| = 2^{d-1} \cdot | \poch_i g(x)|,
\end{align*}
where we used the fact that $\min\{|v-w| \colon v,w \in V_d, v\neq w\} = 1/2^{d-1}$.
This inequality yields that for every $i \in [n]$ it holds that $\| \poch_i h \|_2^2 \leq 2^{2d-2} \cdot \| \poch_i g \|_2^2$. The claim follows from summing this relation over $i \in [n]$.

Observe that $\EXbn h = \PP(g \neq \eta)$. By claim (\ref{claim1}), the log-Sobolev Corollary \ref{logsob} applied to the $\{0,1\}$-valued function $h$, the monotonicity of $\ln$ with (\ref{ubound}) and the bound $\ln(1+x) \geq \tfrac{x}{2}$ for $x \in (0,1)$, we get
\begin{gather}
2^{2d-2} \totinf(g) \geq \totinf(h) \geq \frac{1}{2} \EXbn h \cdot \ln \left(\frac{1}{\EXbn h}\right) \geq \frac{1}{2} \EXbn h \cdot \ln \left(1+\frac{1}{|V_d|-1}\right) \geq \frac{\EXbn h}{2^{3d+2}}. \label{logsobuse}
\end{gather}
Inequality (\ref{logsobuse}) implies that $\EXbn h \leq 2^{5d} \totinf(g)$. Using this, we may improve the lower bound of $\ln(1/\EXbn h)$ in (\ref{logsobuse}) as follows:
\begin{gather}
2^{2d-2}\totinf(g) \geq \frac{1}{2} \EXbn h \cdot \ln \left(\frac{1}{\EXbn h}\right) \geq \frac{1}{2} \EXbn h \cdot \ln \left(\frac{1}{2^{5d}\totinf(g)}\right). \label{logsobuse2}
\end{gather}
The restriction $\totinf(g) < 2^{-5d}$ guarantees that $\ln\left(\frac{1}{2^{5d}\totinf(g)}\right) > 0$. Recalling that $\EXbn h = \PP(g \neq \eta)$ and rewriting (\ref{logsobuse2}) completes the proof.
\end{proof}

\begin{prop} \label{fknprop5}
Let $n$, $d$, $k$ be integers such that $n > k \geq 1$ and $d\geq 2$. Assume that $g \boolf \RR$ and $\beta \in [0,\tfrac{1}{2^{6d}})$ satisfy
\begin{equation*}
\maxinf_{k+1}(g) \leq \beta \left(\frac{\ln n}{n}\right)^{k+1} \quad \text{and for every} \ \bj \in [n]_k \ \text{we have} \ \imag(\poch_\bj g) \subseteq V_d.
\end{equation*}
Then for every $\bj \in [n]_k$ there exists $\eta_\bj \in \imag(\poch_\bj g) \subseteq V_d$ such that
\begin{gather*}
|\gg(\bj) - \eta_\bj| \leq 2^{4d+2} \beta \left(\frac{\ln n}{n}\right)^k.
\end{gather*}
\end{prop}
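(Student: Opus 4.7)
Fix $\bj \in [n]_k$ and let $h := \poch_\bj g$. By definition $\widehat{h}(\emp) = \EX h = \gg(\bj)$, so the task reduces to producing $\eta_\bj \in \imag(h) \subseteq V_d$ close to $\EX h$. Since $\imag(h) \subseteq V_d$ by hypothesis, the plan is to apply Proposition~\ref{fknprop1} to $h$ and then convert the resulting $\PP(h \neq \eta_\bj)$ estimate into a bound on $|\EX h - \eta_\bj|$ via the diameter estimate $\sup |h - \eta_\bj| \leq 2 \cdot 2^{2d}$ on $V_d$.

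First I would bound $\totinf(h)$. From the Fourier--Walsh formula, $\poch_i h = \poch_{\bj \cup \{i\}} g$ for $i \notin \bj$ and $\poch_i h = 0$ for $i \in \bj$, so
\[
\totinf(h) \ = \ \sum_{i \notin \bj} \infl_{\bj \cup \{i\}}(g) \ \leq \ n \cdot \maxinf_{k+1}(g) \ \leq \ \beta \, \frac{(\ln n)^{k+1}}{n^k}.
\]
An elementary calculation shows $(\ln n)^{k+1}/n^k \leq 4/\e^2 < 1$ for $k \geq 1$ and $n \geq 2$, so $\totinf(h) < \beta < 2^{-6d} < 2^{-5d}$, validating the hypothesis of Proposition~\ref{fknprop1}. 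That proposition produces $\eta_\bj \in \imag(h)$ with $\PP(h \neq \eta_\bj) \leq 2^{2d-1}\totinf(h)/\ln(1/(2^{5d}\totinf(h)))$. Combining with $|\gg(\bj) - \eta_\bj| = |\EX(h-\eta_\bj)| \leq \EX|h-\eta_\bj| \leq 2^{2d+1}\PP(h \neq \eta_\bj)$ yields
\[
|\gg(\bj) - \eta_\bj| \ \leq \ \frac{2^{4d}\, \totinf(h)}{\ln(1/(2^{5d}\totinf(h)))}. \tag{$\star$}
\]

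To finish, I would use that $t \mapsto t/\ln(1/(2^{5d}t))$ is increasing on $(0, 2^{-6d}]$ and substitute the upper bound $\totinf(h) \leq \beta(\ln n)^{k+1}/n^k$ in $(\star)$. The target bound $2^{4d+2}\beta(\ln n/n)^k$ then follows provided
\[
\ln\!\Big(\frac{n^k}{2^{5d}\beta(\ln n)^{k+1}}\Big) \ \geq \ \frac{\ln n}{4}, \qquad \text{i.e.,}\qquad (k-\tfrac14)\ln n - (k+1)\ln\ln n \ \geq \ 5d\ln 2 + \ln \beta.
\]
The right-hand side is less than $-d\ln 2 < 0$ because $\beta < 2^{-6d}$, whereas the left-hand side is nonnegative for all $k \geq 1$, $n \geq 2$: writing $u = \ln n$, the function $(k-\tfrac14)u - (k+1)\ln u$ is minimised at $u^\star = (k+1)/(k-\tfrac14) > 1 > \ln 2$ with value $(k+1)(1 - \ln((k+1)/(k-\tfrac14)))$, which is positive (the tightest case $k=1$ gives $\approx 0.04$, and the value grows with $k$).

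The main (mild) obstacle is the last logarithmic inequality, which requires careful bookkeeping of constants so that the $\ln n$ pulled out of $\ln(1/(2^{5d}\totinf(h)))$ precisely cancels the extra factor of $\ln n$ sitting in $\totinf(h) \leq n \cdot \maxinf_{k+1}(g)$. This cancellation is exactly the mechanism behind Oleszkiewicz's $d = 1$ version in \cite{Ol21}, and the plan is its higher-order adaptation: the log-Sobolev input of Proposition~\ref{fknprop1} delivers a logarithmic denominator strong enough to absorb the summation over $n$ bits of $\bj \cup \{i\}$-influences.
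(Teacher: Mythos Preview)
Your proposal is correct and follows essentially the same route as the paper: bound $\totinf(\poch_\bj g)$ by $\beta(\ln n)^{k+1}/n^k$, invoke Proposition~\ref{fknprop1}, and convert the resulting $\PP(\poch_\bj g \neq \eta_\bj)$ estimate into a bound on $|\gg(\bj)-\eta_\bj|$ via the $2^{2d+1}$ diameter of $V_d$. The only cosmetic difference is the order of the last two steps and the handling of the logarithmic term---the paper first drops the factor $2^{5d}\beta<1$ from the logarithm and then uses $\ln n/\ln(n^k/(\ln n)^{k+1})\leq 4$, whereas you keep $2^{5d}\beta$ and verify the equivalent inequality $(k-\tfrac14)\ln n-(k+1)\ln\ln n \geq 5d\ln 2+\ln\beta$ directly.
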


\begin{proof}
Fix $\bj \in [n]_k$. We will apply Proposition \ref{fknprop1} to the function $\poch_\bj g \boolf V_d$. Let us check the assumption that $\totinf(\poch_\bj g) < \frac{1}{2^{5d}}$. Indeed, we have $\sqrt{n} \geq \ln n$ and $\beta < \frac{1}{2^{5d}}$, so
\begin{equation}
\totinf(\poch_\bj g) = \sum_{i \in [n]} \infl_i(\poch_\bj g) = \!\! \sum_{\substack{\ \bi \in [n]_{k+1}\!\colon\\ \bj \subseteq \bi}}\!\! \infl_\bi(g) \leq \!\! \sum_{\substack{\ \bi \in [n]_{k+1}\!\colon\\ \bj \subseteq \bi}} \beta \left(\frac{\ln n}{n}\right)^{k+1} \leq \beta \frac{(\ln n)^{k+1}}{n^k} < \frac{1}{2^{5d}}. \label{intermedbound}
\end{equation}
Therefore, we get $\eta_\bj \in \imag(\poch_\bj g) \subseteq V_d$ satisfying
\begin{gather}
\PP(\poch_\bj g \neq \eta_\bj) \leq \frac{2^{2d-1}\totinf(\poch_\bj g)}{\ln\left(\frac{1}{2^{5d}\totinf(\poch_\bj g)}\right)} \leq 2^{2d-1} \beta \left(\frac{\ln n}{n}\right)^k \cdot \frac{\ln n}{\ln\left(\frac{n^k}{(\ln n)^{k+1}}\right)} \leq 2^{2d+1}\beta \left(\frac{\ln n}{n}\right)^k, \label{pochneq}
\end{gather}
where we used an intermediate bound from (\ref{intermedbound}) and the inequality $2^{5d}\beta < 1$.

Now observe that
\begin{align*}
\gg(\bj) = \EX [\poch_\bj g] &= \eta_\bj \PP(\poch_\bj g = \eta_\bj) + \EX [(\poch_\bj g) \IND_{\{\poch_\bj g \neq \eta_\bj\}}] \\
&= \eta_\bj (1 - \PP(\poch_\bj g \neq \eta_\bj)) + \EX [(\poch_\bj g) \IND_{\{\poch_\bj g \neq \eta_\bj\}}].
\end{align*}
Thus, as $|\eta_\bj| \leq 2^{2d}$ and $\|\poch_\bj g\|_{\infty} := \max_{x \in \{-1,1\}^n} |\poch_\bj(x)| \leq 2^{2d}$, we get
\begin{equation*}
|\gg(\bj) - \eta_\bj| \leq (|\eta_\bj| + \|\poch_\bj g\|_{\infty}) \PP(\poch_\bj g \neq \eta_\bj) \leq 2^{4d+2} \beta \left(\frac{\ln n}{n}\right)^k. \qedhere
\end{equation*}
\end{proof}

\subsection{Coefficients of a $d$-degree function}

In this section we prove that the Fourier--Walsh coefficients of a $d$-degree Boolean function lie in the $\ZZ / 2^{d-1}$ lattice. This result in print has appeared in \cite{KMS12} [Proposition 3.1], but we include our rather short proof to keep the work self-contained.

\begin{prop} \label{wsppocho}
Let $d \geq 1$ be an integer and $g \boolf \{-1,1\}$ be a $d$-degree function. Then for every $\bi \subset [n]$ we have $\gg(\bi) \in \ZZ / 2^{d-1}$. Consequently, at most $2^{2d-2}$ of the coefficients $\gg(\bi)$ are non-zero.
\end{prop}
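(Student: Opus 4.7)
The plan is to pass to the $\{0,1\}$-variable representation of $g$, where a simple parity observation forces the Fourier--Walsh coefficients onto the required lattice. First I would substitute $x_j = 1 - 2 y_j$ with $y_j \in \{0, 1\}$ and expand $\tilde g(y) := g(1-2y_1, \ldots, 1-2y_n)$ as a multilinear polynomial $\tilde g(y) = \sum_{|\mathbf{m}| \leq d} a_{\mathbf{m}} \prod_{j \in \mathbf{m}} y_j$; the substitution preserves multilinearity and does not increase the degree. Möbius inversion over the Boolean lattice expresses each $a_{\mathbf{m}}$ as a signed $\ZZ$-combination of the values $\tilde g(\mathbf{1}_\bk)$ for $\bk \subseteq \mathbf{m}$, so every $a_{\mathbf{m}}$ is an integer. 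A parity observation is crucial: since $\tilde g$ takes only the odd values $\pm 1$, the function $(\tilde g - 1)/2$ is $\ZZ$-valued and multilinear of degree $\leq d$; applying the same Möbius formula to it gives integer coefficients, and comparing with $\tilde g = 1 + 2\cdot(\tilde g - 1)/2$ shows that $a_{\emptyset}$ is odd while $a_{\mathbf{m}} \in 2\ZZ$ for every non-empty $\mathbf{m}$.

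Next I would compute $\gg(\bi)$ directly from the $y$-representation. Using $\chi_\bi(x) = \prod_{j \in \bi}(1 - 2 y_j) = \sum_{\bk \subseteq \bi}(-2)^{|\bk|}\prod_{j \in \bk}y_j$ together with $\EX_y\!\left[\prod_{j \in \mathbf{m} \cup \bk} y_j\right] = 2^{-|\mathbf{m} \cup \bk|}$ (since $y_j^2 = y_j$), swapping sums and splitting each $\bk \subseteq \bi$ according to whether its elements lie in $\mathbf{m}$, an inclusion-exclusion simplification in which the factor $\sum_{\bk_0 \subseteq \bi \setminus \mathbf{m}}(-1)^{|\bk_0|}$ vanishes unless $\bi \subseteq \mathbf{m}$ yields the closed form
\[
\gg(\bi) = (-1)^{|\bi|} \sum_{\mathbf{m} \supseteq \bi,\, |\mathbf{m}| \leq d} \frac{a_{\mathbf{m}}}{2^{|\mathbf{m}|}}.
\]

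Finally, the parity information concludes the proof. For $\bi \neq \emptyset$ every $\mathbf{m}$ in the sum is non-empty, so $a_{\mathbf{m}}/2^{|\mathbf{m}|} = (a_{\mathbf{m}}/2)/2^{|\mathbf{m}|-1} \in \ZZ/2^{|\mathbf{m}|-1} \subseteq \ZZ/2^{d-1}$, and the whole sum lies in $\ZZ/2^{d-1}$. For $\bi = \emptyset$ the only extra summand at $\mathbf{m} = \emptyset$ contributes $a_\emptyset \in \ZZ \subseteq \ZZ/2^{d-1}$, and the remaining terms are handled identically; hence $\gg(\bi) \in \ZZ/2^{d-1}$. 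The second assertion then follows from Parseval: $1 = \sum_\bi \gg(\bi)^2$ and each non-zero coefficient satisfies $|\gg(\bi)| \geq 2^{-(d-1)}$, so at most $2^{2d-2}$ of them can be non-zero. The only delicate step is the inclusion-exclusion identity producing the closed form for $\gg(\bi)$; once that is in place, the parity observation and Parseval finish the argument immediately.
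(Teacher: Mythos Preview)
Your argument is correct: the closed form $\gg(\bi) = (-1)^{|\bi|}\sum_{\mathbf m\supseteq\bi,\,|\mathbf m|\le d} a_{\mathbf m}/2^{|\mathbf m|}$ holds, the parity observation $a_{\mathbf m}\in 2\ZZ$ for $\mathbf m\neq\emptyset$ is valid, and the Parseval step is the same as in the paper.

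However, your route is genuinely different from the paper's. The paper never leaves the $\{-1,1\}$ world; instead it runs a short downward-induction (phrased as a maximal counterexample) using the derivative machinery already built in \cref{sec:basic}: for the top-cardinality offending set $\bi$ one writes $\poch_\bi g = \gg(\bi)+\sum_{\bj\supsetneq\bi}\gg(\bj)\chi_{\bj\setminus\bi}$, notes that $\imag(\poch_\bi g)\subset\ZZ/2^{|\bi|-1}\subset\ZZ/2^{d-1}$ by \cref{der_int_mult}, and observes that all other summands already lie in $\ZZ/2^{d-1}$ by maximality. This is a two-line argument given the infrastructure of the paper. Your approach is more computational but entirely self-contained---it does not invoke the derivative corollary and instead extracts the lattice structure from the M\"obius coefficients in the $\{0,1\}$-representation; as a bonus you obtain an explicit formula relating $\gg(\bi)$ to those coefficients. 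Each proof has its merit: the paper's exploits tools already on the table, yours would work equally well in a context where the derivative framework has not been set up.
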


\begin{proof}
We have $\gg(\bi) = 0$ for $\bi \in [n]_{>d}$, so it is enough to resolve the $\bi \in [n]_{\leq d}$ case.

Assume, for a contradiction, that there is $\bi \in [n]_{\leq d}$ with $\gg(\bi) \notin \ZZ / 2^{d-1}$, and choose such a set $\bi$ of maximal cardinality. We have
\begin{equation} \label{eq:ddegcoef}
\poch_\bi g = \gg(\bi) + \!\!\!\sum_{\substack{\ \bj \in [n]_{\leq d}\!\colon \\ \bi \subsetneq \bj}}\!\! \gg(\bj) \chi_{\bj \setminus \bi}.
\end{equation}
Since $g$ is Boolean, by \cref{der_int_mult}, $\imag(\poch_\bi g) \subset \ZZ / 2^{|\bi|-1} \subset \ZZ  / 2^{d-1}$. Further, by the choice of~$\bi$, we have $\gg(\bj) \in \ZZ / 2^{d-1}$ for all $\bj \in [n]_{\leq d}$ such that $\bj \supsetneq \bi$. Hence, all of the terms but one in \eqref{eq:ddegcoef} are integer multiples of $1/2^{d-1}$, which yields $\gg(\bi) \in \ZZ / 2^{d-1}$, a contradiction.

As by Parseval's theorem $\sum_{\bi \subset [n]} \gg(\bi)^2 = 1$, clearly at most $2^{2d-2}$ of the coefficients $\gg(\bi)$ are non-zero.
\end{proof}

\section{Proof of \cref{FKN-type theorem}} \label{sec:fkn}

We are now ready to prove \cref{FKN-type theorem}. Let us sketch the plan of the proof.

First, thanks to \cref{Main Theorem} and the Kindler--Safra \cref{kindlersafra}, the assumption \eqref{maxinfassump} of small $(d+1)$-degree influences gives a candidate $d$-degree Boolean function $g$ approximating~$f$. All we need to prove is that this function $g$ is sufficiently close to $f$ in the sense of the bounds \eqref{statementtwierdzenia}. We show this by a strengthened induction, where in parallel to proving the bound we construct a scheme of (quasi-Boolean) functions with small influences which have good approximating properties owing \cref{fknprop4,fknprop5}. 

We will prove a slightly (and equivalently) rephrased statement of \cref{FKN-type theorem}.

\begin{theorem} \label{fkntwierdzenie}
Let $n > d \geq 1$. There are positive constants $C_3$, $C_4$ depending only on $d$ such that if $f \boolf \{-1,1\}$ satisfies 
\begin{equation} \label{maxinfassump}
\maxinf_{d+1}(f) \leq \alpha \left(\frac{\ln n}{n}\right)^{d+1}
\end{equation}
for some $\alpha \in (0,C_3)$, then there exists a $d$-degree function $g \boolf \{-1,1\}$ such that for every $\bj \in [n]_{\leq d}$ we have
\begin{equation}\label{statementtwierdzenia}
|\ff(\bj) - \hat{g}(\bj)| \leq C_4 \alpha \left(\frac{\ln n}{n}\right)^{|\bj|}.
\end{equation}
\end{theorem}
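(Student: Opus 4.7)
The plan is to combine Theorem \ref{kkl111} with the Kindler--Safra \cref{kindlersafra} to produce a $d$-degree Boolean candidate $g$, and then to refine the crude $L^2$-approximation into the level-wise Fourier bounds \eqref{statementtwierdzenia} via a downward induction on $k = |\bj|$ from $d$ to $1$, applied to the scheme of intermediate functions
\[ f_k := f - g_{\geq k+1}, \qquad g_{\geq k+1} := \sum_{k < |\bk| \leq d} \hat g(\bk)\chi_\bk, \]
with the case $|\bj|=0$ handled separately. At each induction step Propositions \ref{fknprop4} and \ref{fknprop5} will supply complementary information. Concretely, Theorem \ref{kkl111} applied with $d+1$ in place of $d$ converts \eqref{maxinfassump} into $\bW^{\geq d+1}(f) \leq 10\alpha$; Theorem \ref{kindlersafra} then yields a $d$-degree Boolean $g$ with $\|f-g\|_2^2 \leq 10\, C_{KS}\alpha$. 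Since $f,g$ are $\{-1,1\}$-valued we have $(f-g)^2 = 4\IND_{\{f\neq g\}}$, so $\PP(f \neq g) \leq \tfrac{10\, C_{KS}\alpha}{4}$ and
\[ |\hat f(\bj) - \hat g(\bj)| \leq \EX|f-g| = 2\PP(f \neq g) \leq 5\, C_{KS}\alpha \]
for every $\bj$; this already settles \eqref{statementtwierdzenia} at $|\bj|=0$.

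The induction hypothesis at step $k \in \{1,\ldots,d\}$ consists of three pieces: (H1) $\maxinf_{k+1}(f_k) \leq \beta_k (\ln n/n)^{k+1}$ with $\beta_k \leq 2^{(2d+1)(d-k)}\alpha$; (H2) $\imag(\poch_\bj f_k) \subseteq V_d$ for every $|\bj| \leq k$; (H3) $|\hat f(\bj) - \hat g(\bj)| \leq C_k \alpha (\ln n/n)^k$ for every $|\bj|=k$. The base case $k=d$ is immediate: $f_d = f$, $\beta_d = \alpha$ by assumption, and (H2) follows since $f$ is Boolean (use \cref{der_int_mult}). In the inductive step, (H1) and (H2) allow Proposition \ref{fknprop5} (with its parameter $d$ taken equal to the theorem's $d$) to produce $\eta_\bj \in V_d \subseteq \ZZ/2^{d-1}$ with $|\hat f(\bj) - \eta_\bj| \leq 2^{4d+2}\beta_k (\ln n/n)^k$ for every $|\bj|=k$ (noting $\widehat{f_k}(\bj) = \hat f(\bj)$ at this level). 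By \cref{wsppocho} we have $\hat g(\bj) \in \ZZ/2^{d-1}$, and the Kindler--Safra bound $|\hat f(\bj) - \hat g(\bj)| \leq 5\, C_{KS}\alpha$ implies that for $\alpha$ below a threshold $C_3(d)$ both $\eta_\bj$ and $\hat g(\bj)$ lie within distance less than $1/2^{d-1}$ of $\hat f(\bj)$; the lattice spacing then forces $\eta_\bj = \hat g(\bj)$, yielding (H3) with $C_k = 2^{4d+2}\beta_k$.

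To close the induction, pass (H1) and (H2) to step $k-1$. For (H2), observe that by \cref{wsppocho} the polynomial $\poch_\bj g_{\geq k}$ has at most $2^{2d-2}$ nonzero Fourier coefficients, each of magnitude at most $1$ and in $\ZZ/2^{d-1}$; hence $|\poch_\bj g_{\geq k}| \leq 2^{2d-2}$ and $\poch_\bj f_{k-1} = \poch_\bj f - \poch_\bj g_{\geq k}$ takes values in $\ZZ/2^{d-1} \cap [-2^{2d}, 2^{2d}] = V_d$. For (H1), the key identity is that $f_{k-1}$ and $f_k$ differ only in level-$k$ Fourier modes, so for every $|\bi|=k$
\[
\infl_\bi(f_{k-1}) - \widehat{f_{k-1}}(\bi)^2 = \infl_\bi(f_k) - \widehat{f_k}(\bi)^2 \leq 2^{2d}\beta_k (\ln n/n)^k
\]
by Proposition \ref{fknprop4} applied to $f_k$ at level $k$; combined with the bound on $\widehat{f_{k-1}}(\bi) = \hat f(\bi) - \hat g(\bi)$ coming from (H3), this gives $\maxinf_k(f_{k-1}) \leq 2^{2d+1}\beta_k (\ln n/n)^k$, i.e.\ (H1) at step $k-1$ with $\beta_{k-1} = 2^{2d+1}\beta_k$. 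The resulting bounds $\beta_k \leq 2^{(2d+1)(d-k)}\alpha$ stay below the threshold $1/2^{6d}$ required by Proposition \ref{fknprop5} uniformly in $k$ provided $\alpha < C_3(d)$.

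The main obstacle, and the reason Propositions \ref{fknprop4} and \ref{fknprop5} must be used in tandem rather than separately, is the propagation of the high-frequency tail $\sum_{\bk \supsetneq \bi, |\bk| > d}\hat f(\bk)^2$ down the levels. A naive estimate via $\binom{n-|\bi|}{d+1-|\bi|}\maxinf_{d+1}(f)$ introduces a spurious factor of $(\ln n)^{d+1-|\bi|}$, which is incompatible with the desired $(\ln n/n)^{|\bi|}$-type bound at every level $|\bi| \leq d$; Proposition \ref{fknprop4} applied to $f_k$ at each intermediate level is precisely what replaces this estimate with the sharp order $\alpha(\ln n/n)^{|\bi|}$, allowing the scheme to telescope cleanly from level $d$ down to level $1$ with constants depending only on $d$.
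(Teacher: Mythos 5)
Your proposal is correct and follows essentially the same route as the paper's own proof: the candidate $g$ from \cref{kkl111} plus Kindler--Safra, the intermediate functions $f_k = f - \sum_{k<|\bk|\leq d}\hat g(\bk)\chi_\bk$, and a downward induction combining \cref{fknprop4} (influence--coefficient gap), \cref{fknprop5} with the lattice structure from \cref{wsppocho} to force $\eta_\bj = \hat g(\bj)$, and the identity $\infl_\bi(f_{k-1}) = \infl_\bi(f_k) - \widehat{f_k}(\bi)^2 + (\widehat{f_k}(\bi)-\hat g(\bi))^2$ to propagate the influence bound. The only deviations are in constant bookkeeping (e.g.\ $\beta_{k-1}=2^{2d+1}\beta_k$ versus the paper's $\alpha_{k-1}=2^{8d}\alpha_k$, and $\EX|f-g|$ versus $\|f-g\|_2$ for the closeness of coefficients), which are immaterial.
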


\begin{proof}
Observe that the bound \eqref{maxinfassump} and \cref{Main Theorem} yield $\bW^{\geq d+1}(f) \leq 10 \alpha$. Hence, the Kindler--Safra theorem (\cref{kindlersafra}) implies the existence of a $d$-degree $g\boolf \{-1,1\}$ such that 
\begin{equation} \label{wniosekks}
 \|f-g\|_2^2 \leq 10C_{KS} \alpha.
\end{equation}
We will prove that the $g$ obtained satisfies the bound (\ref{statementtwierdzenia}) for~$C_3 = \min\{2^{-10d^2}, (2^{2d}10C_{KS})^{-1}\}$ and $C_4 = \max\{2^{8d^2}, 10C_{KS}\}$. 

First, let us note that the bound (\ref{statementtwierdzenia}) holds for $\bj = \emp$. Indeed,
\begin{equation}\label{empcase}
|\ff(\emp) - \gg(\emp)| = | \EX [f-g] | \leq \EX [|f-g|] \leq \EX [(f-g)^2] = \|f-g\|_2^2 \leq 10C_{KS} \alpha \leq C_4\alpha.
\end{equation}

Define $f_1, \ldots, f_d \boolf \RR$ by $f_d := f$ and
\begin{gather}
f_k \ := \  f - \sum_{\substack{\ \bi \subset [n]\!\colon \\ k < |\bi| \leq d}} \gg(\bi) \chi_\bi \ = \ f_{k+1} - \sum_{\bi \in [n]_{k+1}} \gg(\bi) \chi_\bi. \label{rekurencja}
\end{gather}
\cref{wsppocho} yields $\gg(\bi) \in \ZZ / 2^{d-1}$ for every $\bi \in [n]_{\leq d}$. Fix $\bj \in [n]_{\leq d}$. By \cref{der_int_mult} we have $\imag(\poch_\bj f) \subset \ZZ/2^{d-1}$. In turn, due to the Fourier--Walsh expansion of $\poch_\bj$, we have $\imag(\poch_\bj \chi_\bi) \subset \{-1,0,1\}$. Therefore, \eqref{rekurencja} yields $\imag(\poch_\bj f_k) \subset \ZZ / 2^{d-1}$. 

On the other hand, the second part of \cref{wsppocho} implies the pointwise bound
\[|f_k| \leq |f| + \sum_{\substack{\ \bi \subset [n]\!\colon \\ k < |\bi| \leq d}} |\gg(\bi)| \leq 1 + 2^{2d-2} < 2^{2d}.\]
Hence,
\begin{equation} \label{imgpoch}
\imag(\poch_\bj f_k) \subset [-2^{2d}, 2^{2d}] \cap \ZZ / 2^{d-1} = V_d   \text{ for every } k \in [d] \text{ and } \bj \in [n]_{\leq d}.
\end{equation}

Let $\alpha_d = \alpha$ and recursively define $\alpha_k = 2^{8d}\alpha_{k+1}$ for $k \in [d-1]$. Due to the choice of $C_3$, we have
\begin{equation*}
\alpha_k = 2^{8d(d-k)}\alpha \ \quad \text{and} \quad \ \alpha = \alpha_d \leq \alpha_{d-1} \leq \ldots \leq \alpha_1 \leq \frac{1}{2^{10d}} < 1.
\end{equation*}
We claim that for every $k\in [d]$ the following conditions are satisfied:
\begin{enumerate}[label=(\textbf{C\arabic*}]
\item\!\!\!$_k$) $\maxinf_{k+1}(f_k) \leq \alpha_k \left(\frac{\ln n}{n}\right)^{k+1}$;
\item\!\!\!$_k$) for every $\bj \in [n]_{k}$ we have 
\[\infl_\bj(f_k) - \ff_k(\bj)^2 \leq 2^{2d}\alpha_k \left(\frac{\ln n}{n}\right)^k;\]
\item\!\!\!$_k$) for every $\bj \in [n]_{k}$ we have
\[|\ff_k(\bj) - \gg(\bj)| =  |\ff(\bj) - \gg(\bj)| \leq 2^{4d+2}\alpha_k \left(\frac{\ln n}{n}\right)^k.\]
\end{enumerate}

Observe that $2^{4d+2}\alpha_k \leq 2^{8d^2}\alpha$, so that (\ccc) and \eqref{empcase} imply the theorem with the chosen constant $C_4$.

In order to prove the claim, we run a downwards induction on $k$. We perform it in two steps. First, we prove that for every $k \in [d]$ the condition (\c$_k$) implies (\cc$_k$) and (\ccc$_k$). Second, we prove that conditions (\cc$_k$) and (\ccc$_k$) imply (\c$_{k-1}$).

The base condition (\c$_d$) is simply the assumption (\ref{maxinfassump}). 

Let $\bj \in [n]_k$ and assume (\c$_k$) holds.

\noindent(\cc$_k$): Because of (\c$_k$) and (\ref{imgpoch}), we are allowed to apply \cref{fknprop4} to the function $f_k$ and $\beta = \alpha_k$, which yields the condition.

\noindent(\ccc$_k$): Because of (\c$_k$) and (\ref{imgpoch}), we are allowed to apply \cref{fknprop5} to the function $f_k$ and $\beta = \alpha_k$. As a result, we obtain $\eta_\bj \in V_d$ such that 
\begin{equation} \label{wniosekprop5}
|\ff_k(\bj) - \eta_\bj| \leq 2^{4d+2}\alpha_k \left(\frac{\ln n}{n}\right)^k.
\end{equation}
Since $|\bj| \leq k$, the definition of $f_k$ yields $\ff_k(\bj) = \ff(\bj)$. Thus, it is enough to show that $\gg(\bj) = \eta_\bj$.

Let us observe that for every $\bi \subset [n]$ we have
\begin{equation} \label{fgblisko}
|\ff(\bi) - \gg(\bi)| = \sqrt{(\ff(\bi) - \gg(\bi))^2} \leq \|f-g\|_2 \stackrel{\eqref{wniosekks}}{\leq} \sqrt{10C_{KS} \alpha} < \frac{1}{2^d},
\end{equation}
so by \eqref{wniosekprop5} and inequality $2^{4d+2} \alpha_k < \frac{1}{2^d}$, 
\begin{equation} \label{etagblisko}
|\eta_\bj - \gg(\bj)| \leq |\ff(\bj) - \gg(\bj)| + |\ff(\bj) - \eta_\bj| < \frac{1}{2^{d-1}}. 
\end{equation}
Recall that $\eta_\bj \in V_d$ and that $\gg(\bj)$ is a coefficient of a $d$-degree Boolean function. Hence, both $\eta_\bj$ and $\gg(\bj)$ are integer multiples of $\frac{1}{2^{d-1}}$. Thus, (\ref{etagblisko}) implies $\gg(\bj) = \eta_\bj$, and the condition follows.

Turning to the second step, let us assume that (\cc$_k$) and (\ccc$_k$) hold for some $k \in \{2, \ldots, d\}$.

\noindent(\c$_{k-1}$): Let $\bi \in [n]_k$. By the definition of $f_{k-1}$ and conditions (\cc$_k$), (\ccc$_k$), we get
\begin{align*}
\infl_\bi(f_{k-1}) &= \infl_\bi(f_k) - \ff_k(\bi)^2 + (\ff_k(\bi) - \gg(\bi))^2 \\
&\leq 2^{2d} \alpha_k \left(\frac{\ln n}{n}\right)^k + 2^{8d+4}\alpha_k^2\left(\frac{\ln n}{n}\right)^{2k} \leq  \alpha_{k-1}\left(\frac{\ln n}{n}\right)^k,
\end{align*}
as $\alpha_k \leq \tfrac{1}{2^{10d}}$. Since $\bi \in [n]_k$ was arbitrary, the condition follows. 
\end{proof}

\begin{remark}
It is possible to avoid the use of the Kindler--Safra theorem. Our original argument showed directly that the function $\sum \eta_\bi \chi_\bi$ is the desired $d$-degree Boolean $g$. Such an approach, although elementary, is considerably longer.
\end{remark}

\begin{remark}
A non-trivial example of $f$ satisfying the assumption (\ref{maxinfassump}) of small $d$-degree influences is a slightly modified version of the hypertribe function described in \cref{sec:ex}. The nuance to overcome in this case is making the parameter $\alpha$ smaller than $C_3$. In order to guarantee this, we may increase the size of the tribes by a constant (depending on $d$). After this modification, the bounds obtained in \cref{properties} will be still satisfied for sufficiently large $n$, and following the proof of \cref{prop111}, one can see that $\alpha$ will be suitably small. In this case, for sufficiently large $d$ and $n$, \cref{prop111} and \cref{infl_lower_bd} yield that the approximating function $g$ is the constant $-1$. 
\end{remark}


We close with the proof of \cref{ourcorr}.

\begin{proof}[Proof of \cref{ourcorr}] We show that the function $g$ provided by \cref{FKN-type theorem} satisfies the bound. Assume $\bi \in [n]_{\leq d}$ is such that $|\ff(\bi)| < \frac{1}{2^d}$. Since $g$ is $d$-degree, \cref{wsppocho} yields that $\gg(\bi)$ is an integer multiple of $\frac{1}{2^{d-1}}$. Therefore, appealing to the above proof, \eqref{fgblisko} implies $\gg(\bi) = 0$. 

Thus, for all $\bi \in [n]_{< l}$ we have $\gg(\bi) = 0$, so by Plancherel's theorem,
\begin{equation*}
\|f-g\|_2^2 = \|f\|_2^2 - \|g\|_2^2 + 2\langle g-f, g \rangle = 2\langle g-f, g \rangle = 2 \sum_{\substack{\ \bi \subseteq [n]\!\colon\\ l \leq |\bi| \leq d}} \!\! \big(\gg(\bi) - \ff(\bi)\big) \cdot \gg(\bi).
\end{equation*}
Appealing again to \cref{wsppocho}, we know that at most $2^{2d-2}$ of the coefficients $\gg(\bi)$ are non-zero, so using that $|\gg(\bi)| \leq 1$ and by \cref{FKN-type theorem},
\begin{equation}
\|f-g\|_2^2 \leq 2 \sum_{\substack{\ \bi \subseteq [n]\!\colon\\ l \leq |\bi| \leq d}} \! \big|\hat{g}(\bi) - \ff(\bi)\big| \cdot |\hat{g}(\bi)| \leq 2^{2d-1} \cdot C_4 \alpha \left(\frac{\ln n}{n}\right)^l.
\end{equation}
The claim follows with $C_5 = 2^{2d-1}C_4.$
\end{proof}

\bibliographystyle{amsplain}

\end{document}